\documentclass[11pt]{article}
\usepackage[ansinew]{inputenc}
\usepackage{graphicx}
\usepackage{color}
\usepackage{pdfsync}
\usepackage{enumerate,latexsym}
\usepackage{latexsym}
\usepackage{amsmath,amssymb}
\usepackage{graphicx}
\usepackage{amsthm}

\def\cL{\mathcal{L}}
\newfont{\bb}{msbm10 at 11pt}
\newfont{\bbsmall}{msbm8 at 8pt}

\def\S{{\Sigma}}

\newcommand{\ov}{\overline}
\def\rth{\mathbb{R}^3}
\def\R{\mathbb{R}}
\def\B{\mathbb{B}}
\def\N{\mathbb{N}}

\def\Q{\mathbb{Q}}

\newcommand{\ben}{\begin{enumerate}}
\newcommand{\bit}{\begin{itemize}}
\newcommand{\een}{\end{enumerate}}
\newcommand{\eit}{\end{itemize}}
\newcommand{\wh}{\widehat}
\newcommand{\Int}{\mbox{Int}}
\newcommand{\cH}{{\mathcal H}}

\newcommand{\wt}{\widetilde}

\newcommand{\ed}{\end{document}}

\def\a{{\alpha}}

\def\t{{\theta}}

\def\g{{\gamma}}
\def\G{{\Gamma}}

\def\de{{\delta}}

\def\ve{{\varepsilon}}

\newcommand{\cD}{{\mathcal D}}
\newcommand{\cB}{{\mathcal B}}

\newtheorem{theorem}{Theorem}[section]

\newtheorem{proposition}[theorem]{Proposition}

\newtheorem{corollary}[theorem]{Corollary}
\newtheorem{definition}[theorem]{Definition}

\newtheorem{assertion}[theorem]{Assertion}

\newtheorem{claim}[theorem]{Claim}

\definecolor{b}{rgb}{.1,.1,.7}
\definecolor{rr}{rgb}{.8,0,.3}
\definecolor{g}{rgb}{0,.5,0}
\definecolor{pp}{rgb}{.5,0,.7}
\definecolor{r}{rgb}{.6,0,.3}
\definecolor{y}{rgb}{.9,.99,.9}

\begin{document}

\begin{title}
{One-sided curvature estimates for H-disks }\end{title}
\date{}

\begin{author}
{William H. Meeks III\thanks{This material is based upon
   work for the NSF under Award No. DMS-1309236.
   Any opinions, findings, and conclusions or recommendations
   expressed in this publication are those of the authors and do not
   necessarily reflect the views of the NSF.}
   \and Giuseppe Tinaglia\thanks{The second author was partially
   supported by
EPSRC grant no. EP/M024512/1}}
\end{author}
\maketitle
\vspace{-.3cm}
\begin{abstract}
In this paper we prove an extrinsic one-sided curvature estimate for
disks embedded in $\rth$ with constant mean curvature, which is independent
of the value of the constant mean curvature.
We apply this extrinsic one-sided curvature estimate in~\cite{mt8}
to prove a weak chord arc  result for these disks. In Section~\ref{sec:consequences}
we apply this weak chord arc result  to obtain   an intrinsic
version of the  one-sided curvature estimate for
disks embedded in $\rth$ with constant mean curvature.
In a natural sense, these one-sided curvature estimates generalize respectively,
the extrinsic and intrinsic one-sided
curvature estimates for minimal disks embedded in $\rth$
given by Colding and Minicozzi in Theorem~0.2 of~\cite{cm23} and
in Corollary~0.8 of~\cite{cm35}.

\vspace{.3cm}

\noindent{\it Mathematics Subject Classification:} Primary 53A10,
   Secondary 49Q05, 53C42

\noindent{\it Key words and phrases:}
Minimal surface, constant mean
curvature, one-sided curvature estimate, curvature estimates, minimal lamination,
$H$-surface,
chord arc, removable singularity.
\end{abstract}

\section{Introduction.}
In this paper we prove a one-sided curvature estimate for disks embedded in $\rth$
with constant mean curvature.
An important feature of this estimate is its independence on the value of
the constant mean curvature.

For clarity of  exposition, we will call an oriented surface
$\Sigma$ immersed in $\rth$ an {\it $H$-surface} if it
is {\it embedded}, {\em connected}  and it
has {\it non-negative constant mean curvature $H$}. We will  call an
$H$-surface an {\em $H$-disk} if the $H$-surface is homeomorphic
to a closed unit disk in the Euclidean plane.  We remark that this definition of $H$-surface
differs from the one given in~\cite{mt7},
where we restricted to the case where $H>0$.
In this paper  $\B(R)$ denotes the open ball in $\rth$ centered at the origin $\vec{0}$
of radius $R$, $\ov\B(R)$ denotes its closure and for a
point $p$ on a surface $\Sigma$ in $\rth$, $|A_{\Sigma}|(p)$ denotes the norm
of the second fundamental
form of $\Sigma$ at $p$.

The main result of this paper, which is Theorem~\ref{th} below,  states
that if $\cD $ is an
$H$-disk which lies on one side of a plane $\Pi$, then the norm of the
second fundamental form of $\cD $ cannot
be arbitrarily large at points sufficiently far from the boundary of $\cD$
and sufficiently close to $\Pi$.  This
 estimate is inspired by, depends upon and provides a natural generalization of
 the Colding-Minicozzi
one-sided curvature estimate for minimal disks
embedded in $\rth$,  which is given in Theorem~0.2 in~\cite{cm23}.

\begin{theorem}[One-sided curvature estimate for $H$-disks] \label{th}
There exist $\ve\in(0,\frac{1}{2})$ and
$C \geq 2 \sqrt{2}$ such that for any $R>0$, the following holds.
Let $\cD$ be an $H$-disk such that $$\cD\cap \B(R)\cap\{x_3=0\}
=\O \quad \mbox{and} \quad \partial \cD\cap \B(R)\cap\{x_3>0\}=\O.$$
Then:
\begin{equation} \label{eq1}
\sup _{x\in \cD\cap \B(\ve R)\cap\{x_3>0\}} |A_{\cD}|(x)\leq \frac{C}{R}.
\end{equation} In particular, if $\cD\cap \B(\ve R)\cap\{x_3>0\}\neq\O$, then $H\leq \frac{C}{R}$.
\end{theorem}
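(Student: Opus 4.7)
My plan is to argue by contradiction, using the scale invariance of the hypotheses to reduce to $R=1$ and then splitting the analysis according to whether the mean curvatures of the putative counterexamples stay bounded or blow up. If the theorem were false, there would exist sequences $R_n>0$, $H_n\geq 0$, $H_n$-disks $\cD_n$ satisfying the hypotheses with radius $R_n$, together with points $p_n\in \cD_n\cap\B(\ve_n R_n)\cap\{x_3>0\}$ such that $R_n\,|A_{\cD_n}|(p_n)\to\infty$ and $\ve_n\to 0$. Since multiplication by $1/R_n$ converts an $H$-disk into an $(R_n H)$-disk and all hypotheses are scale invariant, I may assume $R_n=1$; then $p_n\to\vec 0$ with $(p_n)_3>0$, $|A_{\cD_n}|(p_n)\to\infty$, and, after extracting subsequences, either $H_n\to\infty$ or $H_n\to H_\infty\in[0,\infty)$.

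\emph{The case $H_n\to\infty$.} Here I would invoke a radius estimate for $H$-disks from the authors' earlier work, which bounds the extrinsic diameter of any $H$-disk by a universal multiple of $1/H$. Then $\cD_n$ is contained in a ball of radius $O(1/H_n)$ about $p_n$, so for $n$ large $\cD_n\subset\B(1)$. Combined with connectedness and the hypothesis $\cD_n\cap\B(1)\cap\{x_3=0\}=\O$, this forces $\cD_n\subset\{x_3>0\}$, whence $\partial\cD_n\subset\B(1)\cap\{x_3>0\}$, contradicting the boundary hypothesis.

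\emph{The case $H_n\to H_\infty<\infty$.} Here I would perform a blow-up at $p_n$: setting $\l_n:=|A_{\cD_n}|(p_n)$ and $\widetilde{\cD}_n:=\l_n(\cD_n-p_n)$, the rescaled surfaces are $(H_n/\l_n)$-disks satisfying $|A_{\widetilde{\cD}_n}|(\vec 0)=1$, with mean curvature tending to $0$, and the plane hypothesis rescales to $\widetilde{\cD}_n$ avoiding $\{x_3=-\l_n (p_n)_3\}$ inside the expanding ball $\l_n(\B(1)-p_n)$. Extracting a subsequential geometric limit, I would obtain an embedded minimal lamination $\cL$ of $\rth$ or of a half-space, passing through the origin with $|A_\cL|(\vec 0)=1$, and lying in $\{x_3\geq -T\}$ for $T:=\lim \l_n(p_n)_3\in[0,\infty]$. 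When $T<\infty$, applying the Colding--Minicozzi one-sided curvature estimate (Theorem~0.2 of~\cite{cm23}) to the disk leaves of $\cL$ near the origin would force $|A_\cL|(\vec 0)=0$, the desired contradiction.

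\emph{The main obstacle} is the subcase $T=\infty$ of the bounded-$H_n$ analysis: when $p_n$ is far from the plane $\{x_3=0\}$ at the blow-up scale, the one-sided constraint is lost in the limit, so a priori $\cL$ could be a complete embedded minimal surface (e.g.\ a catenoid) for which Colding--Minicozzi does not apply. To rule this out, I expect one must retain boundary-distance information from $\cD_n$ across the rescaling---via an auxiliary chord-arc or multi-valued graph argument---to show that the second fundamental form cannot concentrate in the interior at a scale separated from the plane, thereby forcing the blow-up to occur at the plane and reducing to the $T<\infty$ subcase already handled.
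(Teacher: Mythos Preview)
Your opening moves---scale to $R=1$, argue by contradiction, dispose of large $H_n$---match the paper's, though your use of the radius estimate is mis-stated: Theorem~\ref{rest} bounds $\sup_{p\in\cD} d_{\rth}(p,\partial\cD)$, not the diameter of $\cD$. The paper instead replaces $E(n)$ by a subdisk with $\partial E(n)\subset\partial\B(1)\cap\{x_3>0\}$ and invokes the extrinsic curvature estimate (Theorem~\ref{ext:cest}) to force $H_n\to 0$.

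The substantive gap is in your bounded-$H_n$ analysis. First, blowing up by $\l_n=|A_{\cD_n}|(p_n)$ without point-picking gives no control on nearby curvature, so you cannot simply ``extract a subsequential geometric limit'' to a lamination with $|A_\cL|(\vec 0)=1$. Second, even granting convergence, Colding--Minicozzi applies to minimal \emph{disks}, not to leaves of a lamination, so your $T<\infty$ step needs the half-space theorem and lamination structure results rather than~\cite{cm23}. Third, and decisively, the subcase $T=\infty$ you flag as ``the main obstacle'' is not a side case but essentially the entire content of the theorem, and your proposed rescue via a chord-arc property is circular: in this paper the weak chord-arc result (Theorem~\ref{thm1.1}) is \emph{derived from} Theorem~\ref{th}, not used to prove it.

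The paper's route is genuinely different from your direct blow-up. After noting $H_n\to 0$, it observes (via a Jacobi/stability argument) that near each $\wt p_n$ there are points $p(n)$ with \emph{vertical} tangent plane. The core of the proof is Proposition~\ref{vert}: at any such vertical-tangent point $q_n$ with $x_3(q_n)\to 0$, the correctly rescaled surfaces converge (with multiplicity one or two) to a \emph{vertical} helicoid; the verticality is forced by combining the $3$-valued graph description of Theorem~\ref{mainextension} with the one-sided barrier $\{x_3=0\}$, and the proof requires a delicate elimination of the case where the rescalings fail to have locally bounded curvature (Assertion~\ref{case b}, using the lamination machinery of~\cite{mpr10,mpr11,mpr14,mr8,mr13,hm10}). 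Once this is known, Corollary~\ref{lookslikehel} implies that the curve of vertical tangencies through $p(n)$ is almost tangent to $(0,0,-1)$, hence descends and crosses $\{x_3=0\}$ inside $\B(1)$---contradicting the hypothesis. This ``vertical-axis descends'' mechanism is the missing idea that circumvents your $T=\infty$ obstruction.
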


In contrast to the minimal case, the constant $C$ in equation (\ref{eq1})
need not improve with smaller
choices of $\ve$. To see this, let
$S$ be the sphere of radius $\frac12$ centered at $(0,0,\frac12 )$.
Each surface in the sequence $E_n=(S+(0,0,\frac{1}{n}))\cap \ov{\B}(1)$
is a compact disk that
satisfies the hypotheses of the theorem for $R=1$, has $|A_{E_n}|=2\sqrt{2}$
and, as $n$ tends to infinity, $E_n$
moves arbitrarily close to the origin. In particular these examples show that
the constant $C$ in the above theorem must be at
least $2\sqrt{2}$ no matter how small $\ve$ is.

Theorem~\ref{th} plays an important role in deriving a weak  chord arc property
for $H$-disks in our papers~\cite{mt8, mt13},
which we describe in Section~\ref{sec:consequences}. This weak chord arc property
was inspired by and gives a generalization of
Proposition~1.1 in~\cite{cm35} by Colding and Minicozzi
 for 0-disks to the case of $H$-disks; we  apply this weak chord arc property  to obtain
an intrinsic version of the one-sided curvature estimate in Theorem~\ref{th},
which we describe in Theorem~\ref{TH}.  In the case $H=0$, this intrinsic one-sided
curvature estimate follows from Corollary~0.8 in~\cite{cm35} by Colding and Minicozzi.



\section{Preliminaries.} \label{sec:pre}

Throughout this paper, we use the following notation. Given $a,b,R>0$,
$p\in \rth$ and $\S$ a surface in $\rth$:

\bit
\item $\B(p,R)$ is the open ball of radius $R$ centered at $p$.
\item $\B(R)=\B(\vec{0},R)$, where $\vec{0}=(0,0,0)$.
\item For $p\in \S$, $B_{\S}(p,R)$ denotes the open intrinsic ball in $\S$ of radius $R$.
\item $C(a,b)=\{(x_1,x_2,x_3) \mid x_1^2+x_2^2\leq a^2,\, |x_3|\leq b\}$.
\item $A(r_1,r_2)=\{(x_1,x_2,0)\mid r_2^2\leq x_1^2+x_2^2\leq r_1^2\}$.
\eit

We next recall several results from our manuscript~\cite{mt7} that will be used in this paper.

We first  introduce the notion of multi-valued graph, see~\cite{cm22} for
further discussion and Figure~\ref{3-valuedgraph}.
\begin{figure}[h]
\begin{center}
\includegraphics[width=12cm]{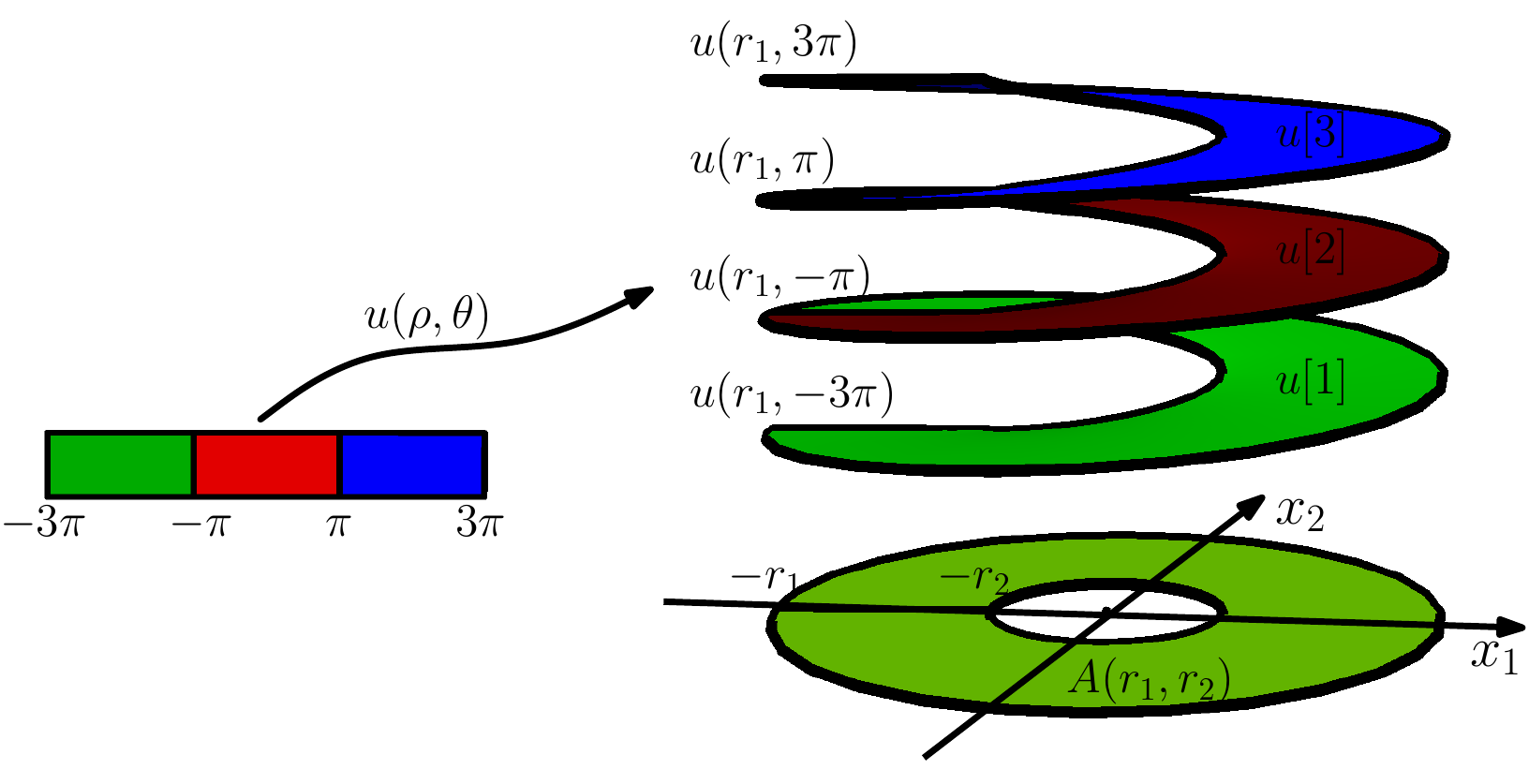}
\caption{A right-handed 3-valued graph.}
\label{3-valuedgraph}
\end{center}
\end{figure}
Intuitively, an $N$-valued graph is a simply-connected embedded surface covering an
annulus such that over a neighborhood of each point of the annulus, the
surface consists of $N$ graphs. The stereotypical  infinite multi-valued
graph is half of the helicoid, i.e., half of an infinite  double-spiral staircase.

\begin{definition}[Multi-valued graph]\label{multigraph} {\rm
Let $\mathcal{P}$ denote the universal cover of the
punctured $(x_1,x_2)$-plane, $\{(x_1,x_2,0)\mid (x_1,x_2)\neq (0,0)\}$, with global coordinates
$(\rho , \theta)$.
\ben[1.] \item
An {\em $N$-valued graph over the annulus $ A(r_1,r_2)$} is a single valued graph
$u(\rho, \theta)$ over $\{(\rho ,\theta )\mid r_2\leq \rho
\leq r_1,\;|\theta |\leq N\pi \}\subset \mathcal{P}$, if $N$ is odd, or
over $\{(\rho ,\theta )\mid r_2\leq \rho
\leq r_1,\;(-N+1)\pi\leq \theta \leq \pi (N+1)\}\subset \mathcal{P}$, if $N$ is even.
\item An  $N$-valued graph $u(\rho,\t)$ over the annulus $ A(r_1,r_2)$ is
called {\em righthanded} \, [{\em lefthanded}] if whenever it makes sense,
$u(\rho,\t)<u(\rho,\t +2\pi)$ \, [$u(\rho,\t)>u(\rho,\t +2\pi)$]
\item The set $\{(r_2,\theta, u(r_2,\theta)), \theta\in[-N\pi,N\pi]\}$
when $N$ is odd (or the set $\{(r_2,\theta, u(r_2,\theta)), \theta\in[(-N+1)\pi,(N+1)\pi]\}$
when $N$ is even) is the {\em inner boundary} of the $N$-valued graph.
\een }
\end{definition}

From Theorem 2.23 in~\cite{mt7} one obtains the following, detailed geometric description
of an $H$-disk with large
norm of the second fundamental form at the origin. The precise meaning
of certain statements below are
made clear in~\cite{mt7} and we refer the reader to that paper for further details.

\begin{theorem}\label{mainextension}
Given $\ve,\tau>0$   and $\ov{\ve}\in (0,\ve/4)$ there
exist  constants $\Omega_\tau:=\Omega(\tau )$,
$\omega_\tau:=\omega(\tau )$ and  $G_\tau:=G(\ve,\tau,\ov\ve ) $
such that if $M$ is an $H$-disk, $H\in (0,\frac 1{2\ve})$,
$\partial M\subset \partial \B(\ve)$, $\vec 0\in M$ and
$|A_M|(\vec 0)>\frac{1}{\eta}G_\tau$,  for $\eta\in(0,1]$, then for any $p\in \ov{\B}(\vec{0},\eta\ov{\ve})$ that is a maximum of the
function
$|A_{M}|(\cdot)(\eta\bar\ve-|\cdot|)$, after translating
$M$ by $-p$, the following geometric description of $M$ holds:
\par

On the scale of the norm of the second
fundamental form  $M$ looks like one or two helicoids nearby  the
origin and, after a rotation that turns these helicoids into
vertical helicoids, $M$  contains a 3-valued graph
$u$ over  $A(\ve\slash\Omega_\tau,\frac{\omega_\tau}{|A_M|(\vec 0)})$
with the norm of its gradient less than $\tau$ and with its inner boundary
in $\B(10\frac{\omega_\tau}{|A_M|(\vec 0)})$.
 \end{theorem}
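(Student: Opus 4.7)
The statement is presented as a consequence of Theorem~2.23 in~\cite{mt7}, so the plan is to explain how it is extracted from that result via the standard Choi--Schoen / Colding--Minicozzi point--selection applied to the scale function
\[
f(x):=|A_{M}|(x)\bigl(\eta\bar\ve-|x|\bigr),\qquad x\in\ov{\B}(\vec 0,\eta\bar\ve).
\]

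First I would let $p$ be a maximum of $f$ on $\ov{\B}(\vec 0,\eta\bar\ve)$ and set $\lambda:=|A_M|(p)$. Since $f(p)\geq f(\vec 0)=\eta\bar\ve\,|A_M|(\vec 0)>\bar\ve\, G_\tau$, choosing $G_\tau$ sufficiently large (in terms of $\ve$, $\tau$ and $\bar\ve$) forces $\lambda$ to be as large as needed. A standard doubling argument then shows $|A_M|\leq 2\lambda$ on $\B\bigl(p,\tfrac{1}{2}(\eta\bar\ve-|p|)\bigr)$, so after translating $M$ by $-p$ and rescaling by $\lambda$, the $H$-disk $\widehat{M}:=\lambda(M-p)$ satisfies $|A_{\widehat M}|(\vec 0)=1$, $|A_{\widehat M}|\leq 2$ on a very large extrinsic ball, and has rescaled mean curvature $H/\lambda$ that is as small as we wish because $H<\tfrac{1}{2\ve}$ and $\lambda$ is arbitrarily large.

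Next I would feed $\widehat{M}$ into Theorem~2.23 of~\cite{mt7}, whose unit--scale conclusions furnish, for the prescribed $\tau$, the description of $\widehat{M}$ near the origin as one or two vertical helicoids (after a rotation) together with a $3$-valued graph $\widehat{u}$ whose gradient satisfies $|\nabla\widehat{u}|<\tau$ over an annulus with radii controlled purely by $\tau$. Undoing the translation and the $\lambda$-rescaling then turns the inner radius of the graph into $\omega_\tau/\lambda=\omega_\tau/|A_M|(\vec 0)$ in the translated coordinates (since, after translation, the new origin is the old point $p$), while the outer radius opens up to the macroscopic constant $\ve/\Omega_\tau$; the gradient bound is preserved because it is scale-invariant, and the inner boundary lies in $\B(10\omega_\tau/|A_M|(\vec 0))$ exactly as claimed.

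The key difficulty is controlling the outer radius of the graph. At the blow-up scale, naive point-selection only guarantees graphical control on a ball of radius comparable to $\lambda(\eta\bar\ve-|p|)$, whereas the statement requires the multi-valued graph to extend all the way out to the macroscopic radius $\ve/\Omega_\tau$. Propagating the helicoidal structure across this entire range uses the hypotheses $\partial M\subset\partial\B(\ve)$ and $H<\tfrac{1}{2\ve}$ to prevent new necks or sheets from forming before one exits $\B(\ve)$, and this is precisely the extension content built into Theorem~2.23 of~\cite{mt7}. The remaining work consists of tracking the constants through this extension uniformly in $\eta\in(0,1]$, which is what dictates the specific roles of $\Omega_\tau=\Omega(\tau)$, $\omega_\tau=\omega(\tau)$, and $G_\tau=G(\ve,\tau,\bar\ve)$ recorded in the statement.
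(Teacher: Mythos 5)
The paper gives no proof of this statement at all: it is imported verbatim as a consequence of Theorem~2.23 in~\cite{mt7}, with the reader referred to that manuscript for the precise meaning and justification. Your sketch takes the same route --- point selection for $|A_M|(\cdot)(\eta\bar\ve-|\cdot|)$, rescaling, and deferring the genuinely hard step (extending the $3$-valued graph from the blow-up scale out to the macroscopic radius $\ve/\Omega_\tau$) to Theorem~2.23 of~\cite{mt7} --- so it is consistent with the paper's treatment, and the only caveat is that the substantive content lives entirely in the cited result rather than in anything verifiable here.
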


  Theorem~\ref{mainextension}
 was inspired by the pioneering work of Colding and Minicozzi  in the minimal
 case~\cite{cm21,cm22,cm24,cm23};
however in the constant positive mean curvature setting this description has led to a
different conclusion, that is the existence of extrinsic radius and curvature estimates stated
below, which do not depend on the results in this paper.

\begin{theorem}[Extrinsic radius estimates, Theorem~3.4 in~\cite{mt7}] \label{rest} There exists an
${\mathcal R_0}\geq \pi$ such that for any $H$-disk $\cD$ with $H>0$,
$$ {\large{\LARGE \sup}_{\large p\in \cD }\{d_{\rth}(p,\partial \cD)\} \leq\frac{{\mathcal R_0}}{H}.}$$
\end{theorem}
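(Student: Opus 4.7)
The plan is to argue by contradiction, combining scale invariance with the local structure theorem, Theorem~\ref{mainextension}. Rescaling by $H$ reduces the statement to finding a universal $\mathcal R_0$ such that every $1$-disk $\cD$ satisfies $\sup_{p\in\cD} d_{\rth}(p,\partial\cD) \le \mathcal R_0$. Assuming no such constant exists, I would select $1$-disks $\cD_n$ and points $p_n\in\cD_n$ with $R_n := d_{\rth}(p_n,\partial\cD_n) \to \infty$, and translate $p_n$ to $\vec 0$ so that $\partial\cD_n \cap \B(R_n) = \emptyset$.

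The argument then splits on the behavior of $|A_{\cD_n}|$ near $\vec 0$. In the bounded-curvature case, compactness for CMC surfaces yields a smooth subsequential limit $\cD_\infty$, a complete embedded surface of constant mean curvature $1$ passing through $\vec 0$. A classification argument, exploiting that each $\cD_n$ is simply connected together with the maximum principle applied to nested unit spheres, should identify the component of $\cD_\infty$ through the origin as a round sphere of radius $1$. This sphere is compact of extrinsic diameter $2$, contradicting that $\vec 0$ lies at extrinsic distance $R_n \to \infty$ from $\partial \cD_n$. In the curvature-concentration case, I would apply Theorem~\ref{mainextension} at a maximum of $|A_{\cD_n}|(\cdot)(\eta\ov{\ve} - |\cdot|)$ with $\eta\ov{\ve}$ chosen macroscopic relative to the scale $R_n$. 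The theorem then produces $3$-valued graphs on annuli of divergent outer radius with gradient below $\tau$; in a suitable blow-down limit, the sheets collapse to a planar (hence mean-curvature-zero) multi-sheeted limit, which is incompatible with the fixed mean curvature $1$ of the $\cD_n$.

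The main obstacle is upgrading a pointwise bound on $|A_{\cD_n}|(\vec 0)$ to a uniform $C^2$ bound on compact subsets of $\rth$, without circularly invoking Theorem~\ref{th} of the present paper. This is achieved in~\cite{mt7} by iterating the multi-valued graph alternative of Theorem~\ref{mainextension} at every scale from the fine scale controlled by the initial curvature up to the macroscopic scale $R_n$, using the planar-limit contradiction at each intermediate scale to rule out hidden blow-ups. A secondary subtlety is ensuring that the component of $\cD_\infty$ through $\vec 0$ in the bounded-curvature case is simply connected and carries a genuine complete embedded CMC structure, since sequences of disks can a priori degenerate in the smooth limit.
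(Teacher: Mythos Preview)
The present paper does not prove Theorem~\ref{rest}. It is quoted in Section~\ref{sec:pre} as a preliminary result from~\cite{mt7} (Theorem~3.4 there), with the explicit remark that the radius and curvature estimates ``do not depend on the results in this paper.'' Consequently there is no proof in this paper against which to compare your proposal.

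That said, a few comments on your sketch. The reduction to $H=1$ and the contradiction setup are fine, and the curvature-concentration branch is essentially the right mechanism: once Theorem~\ref{mainextension} produces a $3$-valued graph with gradient at most $\tau$ over an annulus whose outer radius is of order $\ve/\Omega_\tau$ with $\ve$ macroscopic, the divergence form of the CMC equation forces the domain width to be $O(\tau)$ when $H=1$, which is the contradiction. Your bounded-curvature branch, however, is where the real difficulty hides. Passing to a smooth limit gives a complete embedded $H=1$ surface through the origin, but asserting that the component through $\vec 0$ is simply connected (so that an Alexandrov-type argument pins it down as a round sphere) is not automatic for limits of disks; topology can be created in the limit. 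Moreover, the statement ``a complete embedded simply-connected CMC-$1$ surface in $\rth$ is a sphere'' is essentially a reformulation of the radius estimate itself, so invoking it here risks circularity. In~\cite{mt7} this is avoided by running the multi-valued-graph alternative uniformly rather than by appealing to a classification of the limit, which is what your last paragraph gestures at; that iteration, not the sphere classification, is the load-bearing step.
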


\vspace{.1cm}

\begin{theorem}[Extrinsic curvature estimates, Theorem~3.5 in~\cite{mt7}] \label{ext:cest}
Given $\delta,\cH>0$, there exists a constant $K_0(\delta,\cH)$ such that
for any $H$-disk $\cD$ with $H\geq \cH$,
{\large $${{\sup}_{ \{p\in \cD \, \mid \, d_{\rth}(p,\partial
\cD)\geq \delta\}} |A_\cD|\leq  K_0(\delta,\cH)}.$$}
\end{theorem}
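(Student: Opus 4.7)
The plan is a proof by contradiction. Suppose the theorem fails: then there exist $\delta_0,\cH_0>0$, a sequence of $H_n$-disks $\cD_n$ with $H_n\geq \cH_0$, and points $p_n\in \cD_n$ satisfying $d_{\rth}(p_n,\partial \cD_n)\geq \delta_0$ but $|A_{\cD_n}|(p_n)\to\infty$. The first step is to extract a uniform two-sided bound on $H_n$. By Theorem~\ref{rest}, $\delta_0\leq d_{\rth}(p_n,\partial \cD_n)\leq \mathcal{R}_0/H_n$, so $H_n\leq \mathcal{R}_0/\delta_0$. Thus $H_n\in[\cH_0,\mathcal{R}_0/\delta_0]$, and after a subsequence we may assume $H_n\to H_\infty\geq \cH_0>0$. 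This two-sided bound is what allows the blow-up at $p_n$ to be comparable to the minimal case.

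Next, after translating so that $p_n=\vec 0$, I would set up Theorem~\ref{mainextension} with $\ve$ chosen small enough that $\ve<\delta_0$ and $H_n<\tfrac{1}{2\ve}$ uniformly in $n$ (for instance $\ve=\tfrac{\delta_0}{1+2\mathcal{R}_0}$). Since $\partial \cD_n$ is disjoint from $\B(\delta_0)\supset \B(\ve)$, letting $M_n$ be the H-disk subdomain of $\cD_n$ with $\vec 0\in M_n$ and $\partial M_n\subset\partial\B(\ve)$ puts us precisely in the hypotheses of Theorem~\ref{mainextension} with $\eta=1$. Fixing $\tau>0$ small and taking $n$ large enough that $|A_{M_n}|(\vec 0)>G_\tau$, the theorem produces, after a rotation, a $3$-valued graph $u_n$ on the annulus $A(\ve/\Omega_\tau,\omega_\tau/|A_{M_n}|(\vec 0))$ with $|\nabla u_n|<\tau$, whose inner radius collapses to $0$ while the outer radius is fixed.

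The conclusion comes from passing this picture to a limit. On each compact subannulus $A(\ve/\Omega_\tau,r)$, $r>0$, the gradient bound plus standard PDE theory for the CMC equation gives smooth subsequential convergence of the graphs $u_n$ to smooth multi-valued graphs $u_\infty$ on $A(\ve/\Omega_\tau,r)$ with $|\nabla u_\infty|<\tau$, each sheet being a solution of the CMC equation with constant $H_\infty>0$. On the other hand, because $u_n$ comprises arbitrarily many separated sheets squeezed to fit inside the fixed cylinder $C(\ve/\Omega_\tau,\text{const})$ (the graphs lie inside a bounded vertical slab due to the gradient bound), the limit multi-valued graph describes, together with neighboring levels, a nonempty local minimal-type lamination of a neighborhood of $\vec 0$ by planar leaves; forcing $H_\infty=0$ and contradicting $H_\infty\geq \cH_0$. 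The main obstacle is this last step, namely turning the 3-valued graph together with the adjacent sheets into a limit lamination whose leaves must be planes: one has to verify that, in the scale on which the mean-curvature contribution is seen, the sheets genuinely accumulate and the leaves cannot tilt. One can also replace the final step by rescaling by $|A_{M_n}|(\vec 0)\to\infty$ and using Theorem~\ref{mainextension} together with Meeks--Rosenberg's uniqueness of the helicoid to identify the blow-up as a (minimal) helicoid, which, pulled back to scale $1$, contradicts the extrinsic radius estimate of Theorem~\ref{rest}.
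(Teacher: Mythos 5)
First, a point of comparison: this paper does not actually prove Theorem~\ref{ext:cest}. It is quoted in the preliminaries as Theorem~3.5 of~\cite{mt7}, and the surrounding text states explicitly that the radius and curvature estimates are established in~\cite{mt7} independently of the results proved here. So there is no in-paper proof to measure your argument against; what follows is an assessment of your proposal on its own terms.

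Your setup is sound: the contradiction framework, the two-sided bound $H_n\in[\cH_0,\mathcal{R}_0/\delta_0]$ extracted from the radius estimate of Theorem~\ref{rest}, and the verification of the hypotheses of Theorem~\ref{mainextension} for the subdisks $M_n$ are all correct. The gap is in the endgame, and it is genuine. (a) Theorem~\ref{mainextension} produces a $3$-valued graph, not ``arbitrarily many separated sheets''; a fixed finite number of sheets whose inner boundaries collapse to a point does not laminate a neighborhood of $\vec 0$, and in the limit you obtain only a $3$-valued $H_\infty$-graph with small gradient over a punctured disk of radius $\ve/\Omega_\tau$. That object is no contradiction to $H_\infty\geq\cH_0$: a spherical cap is a CMC graph with small gradient over a disk of radius comparable to $1/H$, and nothing in your argument rules out $\ve/\Omega_\tau$ being of that order. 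Extending the $3$-valued graph to an $N$-valued graph with $N\to\infty$ is a substantial separate result (this is where the real work lies), not a consequence of the statement of Theorem~\ref{mainextension}. (b) Even granting infinitely many collapsing sheets, the claim that the limit leaves must be planes, ``forcing $H_\infty=0$,'' needs an argument: consecutive sheets of an embedded spiraling multigraph have consistently oriented mean curvature vectors, so disjoint ordered graphs with the same constant $H>0$ can perfectly well accumulate (vertical translates of a single $H$-graph do), and no naive maximum-principle comparison yields $H=0$. One would need either a quantitative flux/divergence computation over the annulus --- which bounds $H$ by roughly $\tau\Omega_\tau/\ve$, a bound you cannot convert into a contradiction without controlling how $\Omega_\tau$ depends on $\tau$ --- or the theory of limit leaves of CMC laminations; you supply neither. (c) The alternative ending also fails: rescaling by $\lambda_n=|A_{M_n}|(\vec 0)$ produces surfaces of mean curvature $H_n/\lambda_n\to 0$ whose boundaries recede to infinity, so Theorem~\ref{rest} applied to them gives the bound $\mathcal{R}_0\lambda_n/H_n\to\infty$, which is vacuous; a minimal helicoid as blow-up limit is entirely consistent with the radius estimate at the original scale. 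In short, the reduction to the multigraph picture is fine, but the step that actually uses $H\geq\cH_0$ to reach a contradiction --- the heart of the theorem --- is missing.
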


\vspace{.1cm}

Indeed, since the plane and the helicoid are complete simply-connected minimal
surfaces properly embedded in $\rth$, a radius estimate does not hold
in the minimal case. Moreover rescalings of a helicoid
give a sequence of embedded minimal disks with arbitrarily large norm
of the second fundamental form at points arbitrarily far
from their boundary curves; therefore in the minimal setting,
the extrinsic curvature estimates do not hold.

Next, we recall the notion of the
flux of a 1-cycle in an $H$-surface; see for instance the references~\cite{kks1,ku2,smyt1} for
further discussions of this invariant.

\begin{definition} \label{def:flux} {\rm
Let $\gamma$ be a 1-cycle in an $H$-surface $M$. The {\em flux} of
$\gamma$ is $\int_{\gamma}(H\gamma+\xi)\times \dot{\gamma}$, where $\xi$
is the unit normal to $M$ along $\gamma$. }
\end{definition}

The flux of a 1-cycle in   an $H$-surface $M$ is a homological invariant and
we say that  $M$ has {\em  zero flux} if the flux of any 1-cycle in $M$ is zero;
in particular, since the first homology group of a disk is zero,   an $H$-disk has  zero flux.

Finally, we recall the following definition.

\begin{definition} \label{def:lbsf} {\rm Let $U$ be an open set in $\rth$.
We say that a sequence of surfaces
$\Sigma(n)$ in $\rth$,  has {\em locally bounded norm of the second
fundamental form in $U$} if for every compact
subset $B$ in $U$, the norms of the second fundamental forms of the
surfaces $\Sigma(n)$ are
uniformly bounded in $B$. } \end{definition}

\section{The proof of Theorem~\ref{th}.} \label{sec:th}

\begin{proof}[Proof of Theorem~\ref{th}] After homothetically scaling, it suffices to prove
Theorem~\ref{th} for $H$-disks $E$, where the radius $R$ of
the related ambient balls is fixed. Henceforth, we will assume that $R=1$.

Arguing by contradiction, suppose that Theorem~\ref{th} fails.
Then there exists a sequence of $H_n$-disks $E(n)$ satisfying the hypotheses of Theorem~\ref{th}
and numbers ${\ve}_n$ going to zero, such that $E(n)\cap \B({\ve}_n)\cap\{x_3>0\}$
contains points $\wt{p}_n $  with
$\lim_{n\to \infty }|A_{E(n)}|(\wt{p}_n )= \infty$.
Since we may assume that $\partial \B(1)$ is transverse
to $E(n)$ then, after replacing $E(n)$ by a subdisk containing
$\wt{p}_n $, we may also assume that
$\partial E(n)\subset\partial \B(1)\cap \{x_3>0\}$. Note that when $H>0$, after such a replacement of $E(n)$ by a subdisk, it
might be the case that $E(n)$ is not
contained in $\B(1)$ or in $\{x_3>0\}$ because the
convex hull property need  not hold.

By the extrinsic curvature
estimates given in Theorem~\ref{ext:cest} for $H$-disks with positive mean curvature,
the mean curvatures $H_n$ of the disks $E(n)$ must be
tending to zero as $n$ goes to infinity. Also, note that for $n$ large, there exist points
$p(n)\in E(n)$ with vertical tangent planes
and with intrinsic distances $d_{E(n)}(p(n),\wt{p}_n)$ converging to zero. Otherwise, after replacing by a subsequence,
small but fixed sized intrinsic balls centered at
the points $\wt{p}_n$ would be  stable
(the inner product of the unit normal to $M$ with the vector $(0,0,1)$ is a
nonzero Jacobi function),
thereby having uniform curvature estimates (see for instance Rosenberg, Souam and Toubiana~\cite{rst1} for
these estimates) and contradicting our choices of the points $\wt{p}_n$
with their norms of the second fundamental form becoming arbitrarily large.

To obtain a contradiction, we are going to analyze the behavior of the  set of points
$\wh{\a}_n$ in $E(n)$ where the tangent planes are vertical; in particular, $\wh{\a}_n$  contains $p(n)$.
We will prove for $n$ large that  $\wh{\a}_n$ contains  a smooth arc
$\a_n$ beginning at $p(n)$ that
moves downward at a much faster rate than
it moves sideways and so $\a_n$ must cross
the  $(x_1,x_2)$-plane near the origin. The existence of such a curve
$\a_n\subset E(n)$ will then contradict the fact
$E(n)$ is disjoint from the  intersection of $\B(1)$ with the $(x_1,x_2)$-plane.

The next proposition  describes the geometry of $E(n)$ around some of its points which are above and close
to the $(x_1,x_2)$-plane and where the tangent planes are vertical. The proposition
states that intrinsically close to such points $E(n)$ must look like
a homothetically scaled {\em vertical} helicoid.
The proof of this result
relies heavily on Theorem~\ref{mainextension} and on the uniqueness
of the helicoid by Meeks and Rosenberg~\cite{mr8}; see also Bernstein and Breiner~\cite{bb1}
for a proof of this uniqueness result.


\begin{proposition} \label{vert} Consider a sequence of points
$q_n\in E(n)\cap C(\frac{1}{2},\frac{1}{2})\cap \{x_3>0\}$ with $x_3(q_n)$
converging to zero and where the tangent planes $T_{q_n}E(n)$ to $E(n)$ are vertical.
Then the numbers $\lambda_n:=|A_{E(n)}|(q_n)$ diverge to infinity and a subsequence
of the surfaces $M(n)=\lambda_n(E(n)-q_n)$ converges on compact subsets of $\rth$
to a vertical helicoid ${\mathcal H}$ containing the $x_3$-axis and with maximal absolute
Gaussian curvature $\frac12$ at
the origin. Furthermore, after replacing by a further subsequence, the multiplicity of the convergence of the surfaces $M(n)$
to ${\mathcal H}$ is one or two.
\end{proposition}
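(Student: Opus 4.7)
The plan is to first prove $\lambda_n\to\infty$, then rescale by $\lambda_n$ and extract a blow-up limit using Theorem~\ref{mainextension} together with the Meeks-Rosenberg uniqueness of the helicoid, and finally read off the multiplicity of convergence from the ``one or two helicoids'' alternative in Theorem~\ref{mainextension}.

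\emph{Divergence of $\lambda_n$.} I would argue by contradiction. If $|A_{E(n)}|(q_n)$ stays bounded along a subsequence, then a standard point-picking argument in the intrinsic ball $B_{E(n)}(q_n,r_0)$ for a small fixed $r_0>0$, together with the fact that $H_n\to 0$ already established in the proof of Theorem~\ref{th}, would produce a smooth local limit of $E(n)-q_n$ as a minimal surface through the origin with vertical tangent plane. Such a limit is a local graph over that vertical tangent plane near the origin, and in particular contains points with arbitrarily small, and eventually negative, $x_3$-coordinate. Since $x_3(q_n)\to 0$, this would force $E(n)\cap\B(1)\cap\{x_3=0\}\neq\emptyset$ for large $n$, contradicting the hypothesis.

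\emph{Blow-up and identification of the limit.} Set $M(n):=\lambda_n(E(n)-q_n)$. These are $(H_n/\lambda_n)$-disks with $|A_{M(n)}|(\vec 0)=1$, vertical tangent plane at $\vec 0$, and mean curvature tending to zero. I would apply Theorem~\ref{mainextension} to a subdisk of $E(n)$ of radius $\ve$ around $q_n$, choosing the base point as a nearby maximum of the appropriate weighted curvature function as dictated by that theorem. Because the tangent plane at $q_n$ is already vertical, the rotation appearing in the conclusion of Theorem~\ref{mainextension} may be taken to fix the $x_3$-axis, and the theorem then provides in $M(n)$ a $3$-valued graph of gradient at most $\tau$ over a horizontal annulus $A(\ve\lambda_n/\Omega_\tau,\omega_\tau)$ whose inner boundary lies in $\B(10\,\omega_\tau)$. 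Iterating the multi-valued graph extension technique of~\cite{mt7} upgrades this into uniform $C^2$ control of $M(n)$ on every compact subset of $\rth$. Passing to a subsequence, $M(n)$ converges $C^2$ on compacta to a complete, properly embedded, simply-connected, nonflat minimal surface $\cH\subset\rth$ with $|A_\cH|(\vec 0)=1$. By the Meeks-Rosenberg uniqueness of the helicoid, $\cH$ is a helicoid; the normalization $|A|(\vec 0)=1$ forces the maximal absolute Gaussian curvature to equal $\tfrac12$ at $\vec 0$, and the horizontal multi-valued-graph structure in the blow-up identifies the axis of $\cH$ with the $x_3$-axis.

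\emph{Multiplicity and the main obstacle.} The ``one or two helicoids'' dichotomy in Theorem~\ref{mainextension} translates directly into the multiplicity of convergence of $M(n)$ to $\cH$: a single helicoidal piece corresponds to multiplicity one, while the two-helicoid alternative forces both pieces, having the same limiting axis, pitch, and chirality, to accumulate onto the same $\cH$, giving multiplicity two. The main obstacle is upgrading the purely local, multi-valued graph description supplied by Theorem~\ref{mainextension} into the uniform $C^2$ control on every compact subset of $\rth$ needed before the Meeks-Rosenberg uniqueness theorem can be invoked; this is precisely the step that rests on the full strength of the multi-valued graph extension methods of~\cite{mt7}.
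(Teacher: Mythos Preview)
There are two genuine gaps.

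First, the verticality of the limit helicoid is not a consequence of the tangent plane at $q_n$ being vertical. In Theorem~\ref{mainextension} the rotation is determined by the axis of the forming helicoid at the nearby blow-up point $p$, and that axis need not be the $x_3$-axis just because $T_{q_n}E(n)$ is vertical (every point on a helicoid has some tangent plane, most of them not orthogonal to the axis). In the paper this step is Claim~\ref{verthelix}, and it is proved by a separate contradiction argument: if the axis made an angle $\theta>0$ with $e_3$, then the $3$-valued graph produced by Theorem~\ref{mainextension} over an annulus in the tilted plane, with gradient $<\tau$ and inner radius tending to $0$, would force $E(n)$ to cross $\{x_3=0\}$ once $\tau$ is small relative to $\theta$. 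Your sentence ``the rotation may be taken to fix the $x_3$-axis'' skips exactly this.

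Second, and more seriously, the passage from the local $3$-valued graph of Theorem~\ref{mainextension} to ``uniform $C^2$ control of $M(n)$ on every compact subset of $\rth$'' and then to ``a complete, properly embedded, simply-connected, nonflat minimal surface'' hides essentially the entire content of the proof. The paper does \emph{not} obtain this from multi-valued graph extension in \cite{mt7}. It first rescales by the graph-radius $r(n)$ rather than $\lambda_n$ (so that at least $B_{\Sigma(n)}(\vec 0,1)$ is a controlled graph; $\lambda_n\to\infty$ is then a \emph{consequence} via \eqref{F1}), and it then splits into Case~A (locally bounded curvature) and Case~B (not). Even in Case~A one must argue, via instability of the non-flat limit leaf, that multiplicity is at most two; deduce genus zero from the disk topology of $\Sigma(n)$; invoke \cite{mpr3} and the halfspace theorem \cite{hm10} to get properness and uniqueness of the leaf; and use the flux argument with \cite{cmw1} to get one end before Meeks--Rosenberg applies. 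Case~B --- the possibility that the rescalings fail to have locally bounded curvature --- is the hardest part (Assertion~\ref{case b} with Claims~\ref{chi}--\ref{claim3.5}) and is eliminated using singular minimal lamination theory from \cite{mpr10,mpr11,mpr14,mpr18,mpr19,mr13} together with \cite{hm10}, not by graph extension. Your last paragraph correctly names this as the main obstacle but misattributes its resolution; as written, both the ``simply-connected'' and the ``locally bounded curvature'' conclusions are unjustified, and the multiplicity bound you draw from the ``one or two helicoids'' clause of Theorem~\ref{mainextension} is only a local statement near the blow-up point, whereas the paper obtains the global multiplicity bound from the instability argument.
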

\begin{proof} Crucial to the proof of the proposition is understanding the appropriate
scale to study the geometry of the disks $E(n)$ near $q_n$,
which in principle might not be related to the norms of the second fundamental forms
of  $E(n)$  at the points $q_n$; later we will relate
this new scale to the numbers  $\lambda_n$ appearing in its statement.

Intrinsically near $q_n$, the surface $E(n)$ is   graphical over its tangent
plane $T_{q_n}E(n)$. Recall that each
tangent  plane at $q_n$ is vertical, the sequence of positive numbers
$x_3(q_n)$ is converging to zero and $E(n)$
lies above the $(x_1,x_2)$-plane near $q_n$. It follows that
$B_{E(n)}(q_n,2x_3(q_n))$  cannot be a graph with the norm of its gradient less
than or equal to $1$ over  its (orthogonal) projection to $T_{q_n}E(n)$.

By compactness of $\ov B_{E(n)}(q_n, 2x_3(q_n))$, there is a largest number $r(n)\in (0,2x_3(q_n))$ such that
$B_{E(n)}(q_n,r(n))$ is a graph with the norm of its gradient at most
$1$ over its projection to $T_{q_n}E(n)$. Since $r(n)<2x_3(q_n)$, then
$
\lim_{n\to \infty}r(n)=0$.

 Consider the sequence of
translated and scaled surfaces
\[
\Sigma(n)=\frac{1}{r(n)}(E(n)-q_n).
\]
We claim that   it suffices to prove that a subsequence of
the $\Sigma(n)$ converges with multiplicity
  one or two  to a vertical helicoid containing the $x_3$-axis.
To see this claim holds, suppose
 that a subsequence $\Sigma(n_i)$ of these surfaces converges
 with multiplicity one or two to a vertical
 helicoid ${\mathcal H}'$ containing the $x_3$-axis, then
 $\lambda :=|A_{{\mathcal H}'}|(\vec{0})\in (0,\infty)$ and
 \begin{equation}\label{F1}
\lambda=\lim_{i\to \infty}  |A_{\Sigma(n_i)}|(\vec 0 )
 =\lim_{i\to \infty} r(n_i) |A_{E(n_i)}|(q_{n_i})=\lim_{i\to \infty} r(n_i) \lambda_{n_i}.
 \end{equation}
Since the numbers $r(n_i)$ are converging to zero, equation \eqref{F1} implies that the numbers
$\lambda_{n_i}$ must diverge to infinity,  the sequence of  surfaces
 \[
 M(n_i)=\lambda_{n_i}(E(n_i)-q_{n_i})=\lambda_{n_i}r(n_i)\Sigma (n_i)
 \]
  converges with multiplicity one or two  to ${\mathcal H}=\lambda{\mathcal H}'$ and the
  proposition follows. Thus,
  it suffices to prove that a subsequence of the surfaces $\Sigma(n)$ converges with multiplicity
  one or two  to a vertical helicoid containing the $x_3$-axis.

There are two  cases to consider. \vspace{.1cm}

\noindent{\bf Case~A:} The sequence of surfaces $\Sigma(n)$ has
locally bounded norm of the second fundamental
form in $\rth$. \vspace{.1cm}

\noindent{\bf Case~B:} The sequence of surfaces $\Sigma(n)$ does
{\em not} have locally bounded norm of the second
fundamental form in $\rth$.  \vspace{.1cm}

Suppose that   Case A holds.
A standard compactness argument gives that a
subsequence of the $\Sigma(n)$ converges
$C^{\a}$ to a minimal lamination of $\rth$ for any $\alpha\in (0,1)$: see for
example any of the reference~\cite{bt1,cm23,mr8,sol1}
for these arguments when the surfaces $\S(n)$ are minimal surfaces.
After possibly replacing by a subsequence, we will assume that
the original sequence  $\Sigma(n)$
converges to a minimal lamination ${\mathcal L}$ of $\rth$.

Let $L$ be the leaf of $\mathcal L$ that passes through the
origin and recall that $L$ has a
vertical tangent plane at the origin.
Since $L$ is not a plane, because it is not a graph over this
vertical tangent plane with the norm of its gradient less than 1,
then $L$ is a non-flat leaf of
${\mathcal L}$. By construction $L$  has  bounded norm of the
second fundamental form in compact subsets of $\rth$.
By Theorem~1.6 in~\cite{mr8}, $L$ is a non-limit leaf of
$\mathcal{L}$ and one of the following must hold:
\bit
\item  $L$ is  properly embedded in $\rth$.
\item $L$ is  properly embedded  in an open half-space of $\rth$.
\item $L$ is properly embedded in an open slab of $\rth$.
\eit

Since $L$  is properly embedded in a simply-connected open set
${\mathcal O}$ of $\rth$, then
it separates ${\mathcal O}$ and so it is orientable. Since $L$ is
non-flat and it is complete,
then $L$ is not stable by the classical results of do Carmo, Peng,
Fisher-Colbrie and Schoen~\cite{cp1,fs1}. We claim that the
instability of $L$ implies that the multiplicity
of convergence of domains on $\Sigma(n)$ can be at most one or two. Otherwise,
suppose that the multiplicity of convergence of portions of the surfaces $\S(n) $ to
$L$ is greater than two and let
$\Omega \subset L$ be a smooth compact unstable domain. A
standard argument that we
now sketch  produces a contradiction to the existence of $\Omega$.
By separation properties,
the uniform boundedness of the norms of the second fundamental forms of the compact oriented surfaces
$\S(n)$ in a small $\ve$-neighborhood of $\Omega$ in ${\mathcal O}$
and the properness of $L$, together with the fact that $L$ is not a limit leaf, we have that for $n$ large there exist
three pairwise disjoint compact domains
$\Omega_1(n), \Omega_2(n), \Omega_3(n)$ in $\S(n)$ that are
converging to $\Omega$, each domain is a   normal graph over
$\Omega$ and the unit normal  vectors of $\Omega_1(n)$ and of  $\Omega_3(n)$
at corresponding points
over points of $\Omega$ have positive inner products converging
to 1 as $n$ goes to infinity. Moreover, if we let
$
f_1(n), f_3(n)\colon \Omega\to\R
$
denote the related graphing functions, we can assume that $(f_1(n)-f_3(n))>0$.
 After renormalizing this difference as
 \[
 F(n)=\frac{f_1(n)-f_3(n)}{(f_1(n)-f_3(n))(q)}
 \]
  for some
 $q\in \Int(\Omega)$, elliptic PDE theory implies that a
 subsequence of the $F(n)$ converges to
 a positive Jacobi function on $\Omega$,
 which implies $\Omega$ is stable. This contradiction implies that
 the multiplicity of convergence  is one or two.

Since the multiplicity of convergence of portions of the
$\Sigma(n)$ to $L$ is one or two,
then, for $n$ large, we can lift any simple closed curve
$\gamma $ on the orientable surface $L$ to one or two pairwise
disjoint normal graphs over $\g$ and contained in $\Sigma(n)$, where the number of such lifts
depends on the multiplicity of
the convergence.
This construction gives either one or two simple closed lifted curves in
$\Sigma(n)$. Hence,
since the domains $\Sigma(n)$ have genus zero, it follows
that any pair of transversely intersecting
simple closed curves on $L$ cannot intersect in exactly one
point; therefore, $L$ also has genus zero. By the properness of finite
genus leaves of a minimal
lamination of $\rth$ given by Meeks, Perez and Ros in Theorem 7 in~\cite{mpr3},
$L$ must be properly embedded in $\rth$. In fact, this discussion
demonstrates that  all  the leaves of $\cL$ are proper. Since the leaf
$L$ is not flat, then the strong halfspace theorem by Hoffman and Meeks in~\cite{hm10}
implies that $L$ is the only leaf
in $\cL$.  If $L$ has more than
one end then, by the main result of Choi, Meeks and White in~\cite{cmw1}, $L$ has non-zero flux and so,
by the nature of the convergence of the $\Sigma(n)$ to $L$, the domains $\Sigma(n)$ must have non-zero flux
as well. However, this leads to a contradiction since flux is a
homological invariant and thus $\Sigma(n)$, being
topologically a disk, has zero flux. Therefore $L$ must have
genus zero and one end, which implies that it is
simply-connected. By~\cite{mr8}, $L$ is a helicoid and it remains to
show that it is a vertical helicoid.

\begin{claim}\label{verthelix}
The leaf $L$ is a vertical helicoid.
\end{claim}

\begin{proof}
Recall that $L$ is the limit of
 \[
\Sigma(n)=\frac{1}{r(n)}(E(n)-q_n),
\]
Therefore the helicoid $L$ contains the origin, the tangent plane at
the origin is vertical and, by the definition of $r(n)$, the geodesic ball
centered at the origin is not a graph with norm of its gradient bounded by
one over its tangent plane at the origin.

Let $p$ be the point on the axis of the helicoid $L$ that is closest to
the origin. By the geometric properties of a helicoid and the discussion
in the previous paragraph, it follows that there exist $k_1,k_2>0$ such
that $|p|< k_1$ and that $|A_L|(p)> k_2\in(0,1)$. Let $p_n'\in \Sigma(n)$
be a sequence of points such that $\lim_{n\to\infty}p'_n=p$ and let $p_n\in E(n)$
be the sequence of points such that $p'_n=\frac{p_n-q_n}{r(n)}$. Recall that
$\lim_{n\to\infty}x_3(q_n)=0$ and that $|q_n|^2-x_3(q_n)^2\leq\frac14$. Thus, for $n$ sufficiently large the following holds:
\begin{itemize}
\item $|p_n-q_n|\leq r(n)|p_n'|\leq 2k_1r (n)$;
\item $|p_n|\leq |p_n-q_n|+|q_n|\leq 2k_1r(n)+\sqrt{\frac14+x^2_3(q_n)}$;
\item
$|x_3(p_n)|\leq |x_3(p_n-q_n)|+|x_3(q_n)|\leq |p_n-q_n|+x_3(q_n)\leq 2k_1r(n)+x_3(q_n)$.
\end{itemize}
In particular, on the original scale, the points $p_n$ and $q_n$ have the
same limit. 

The new sequence of surfaces
\[
\Sigma'(n)=\frac{1}{r(n)}(E(n)-p_n)
\]
converges to the translated helicoid $\widehat L=L-p$ whose axis contains
the origin. Clearly, in order to prove the claim, it suffices to show that
$\widehat L$ is a vertical helicoid. Suppose that the axis of $\widehat L$
makes an angle $\theta>0$ with the $x_3$-axis.
Let $\tau>0$ and let $G_\tau:=G(\frac 14, \tau,\frac 1{20} )$ as given by
Theorem~\ref{mainextension} with $\ve=\frac14$ and $\ov\ve=\frac1{20}$.
Let $z_n'\in \Sigma'(n)\cap \B(\frac{G_\tau}{20k_2})$ be a sequence of points where the maximum of the functions
\[
|A_{\Sigma'(n)\cap \B(\frac{G_\tau}{20k_2})}|(\cdot)|\frac{G_\tau}{20k_2}-|\cdot||
\]
is obtained. Clearly, $\lim_{n\to\infty}|z_n'|=0$. Let $z_n\in E_n$ be the
sequence of points such that $z_n'=\frac{1}{ r(n)}(z_n-p_n)$. By the previous
discussion,  $\frac{1}{r(n)}(E(n)-z_n)$ converges to the helicoid $\widehat L$.

Consider the sequence of surfaces
\[
M_\tau(n)=E(n)-p_n=r(n)\Sigma'(n).
\]
Recall that $\partial E(n)\subset\partial \B(1)\cap \{x_3>0\}$,
$E(n)\cap \{x_3=0\}=\O$, $|p_n|\leq 2k_1r(n)+\sqrt{\frac14+x^2_3(q_n)}$ and $|x_3(p_n)|\leq 2k_1r(n)+x_3(q_n)$.
Therefore, when $n$ is sufficiently large and abusing the notation, we
can let $M_\tau(n)$ denote the subdisk of $M_\tau(n)$ containing the component
of $M_\tau(n)\cap \B(\frac14)$ that contains the origin and with boundary in
$\partial \B(\frac 14)$.  Recall that the constant mean curvatures $H_n$ of $E(n)$, and thus of $M_\tau(n)$, are going to zero as $n$ goes to infinity. Therefore, when $n$ is sufficiently large, we have that $H_n\in(0,\frac 18)$. Note that 
\[
M_\tau(n)\cap \{x_3=-2k_1r_n- x_3(q_n)\}=\O.
\]
Let $\eta_n= \frac{G_\tau r(n)}{k_2}$. Then, $\lim_{n\to\infty }\eta_n=0$
and for $n$ sufficiently large, the sequence $M_\tau(n)$ satisfies the following properties.
\begin{itemize}
\item $|A_{M_\tau(n)}|(\vec 0)>\frac{k_2}{r(n)}=\frac{G_\tau}{\eta_n}$;
\item the maximum of the function
$|A_{M_\tau(n)\cap \B(\frac{\eta_n}{20})}|(\cdot)|\frac{\eta_n}{20} -|\cdot||$
is obtained at the point $z_n$;
\item on the scale of the norm of the second fundamental
form $M_\tau(n)-z_n$ looks like the helicoid $\widehat L$.
\end{itemize}

Let $\Pi$ denote the plane containing $z_n$ and perpendicular to the
axis of $\widehat L$. Then, under these hypotheses, Theorem~\ref{mainextension}
implies that there exist constants $\Omega(\tau)$ and $\omega(\tau)$
such that $M_\tau(n)$ contains a 3-valued graph over the annulus in
$\Pi$ centered at $z_n$ of outer radius $\frac1{4\Omega(\tau)}$ and
inner radius $\frac{\omega(\tau)}{|A_{M_\tau(n)}|(\vec 0)}$. Note that
$\lim_{n\to\infty} z_n=\vec 0$ and that $\lim_{n\to\infty}\frac{\omega(\tau)}{|A_{M_\tau(n)}|(\vec 0)}=0$. Therefore, by taking $\tau$ sufficiently
small, depending on $\theta$, and $n$ sufficiently large, this geometric
description contradicts the fact that $M_\tau(n)\cap \{x_3=-2k_1r(n)+ -x_3(q_n)\}=\O$. This contradiction finishes the proof of the claim.
\end{proof}

This finishes the proof of the proposition when Case A holds.
To complete the proof of Proposition~\ref{vert}, it suffices
to demonstrate the following assertion, which is the difficult
point in the proof of Proposition~\ref{vert}.

\begin{assertion} \label{case b} Case B does not occur.
\end{assertion}
\begin{proof}
Some of the techniques used
to eliminate Case B
are motivated by techniques and results developed
in~\cite{cm26,mpr10,mpr11} and their corresponding proofs.
Arguing by contradiction, assume that the sequence of surfaces $\Sigma(n)$
does not have locally bounded norm of the second fundamental
form in $\rth$.

For
clarity of  exposition, we will replace the sequence of surfaces
$\Sigma(n)$ by a specific
subsequence such that for some non-empty closed set
${\chi}$ in $\rth$ with $\chi$ different from $\rth$,
the new sequence of surfaces has locally bounded
norm of the second fundamental form in
$\rth-\chi$, converges $C^{\a}$, for any $\alpha\in (0,1)$, to a non-empty minimal
lamination ${\mathcal L}_{\chi}$ of $\rth -\chi$ and
no further subsequence has locally bounded norm of the
second fundamental form in  $\rth-\chi'$, where
$\chi'$ is a proper closed subset of $\chi$. This reduction
is explained in the next claim.
\begin{claim} \label{chi} After replacing by a subsequence,
the sequence of surfaces $\Sigma(n)$
satisfies the following properties:
\ben[1.]
\item There exists a closed non-empty set $\chi\subset \rth$
such that for every point $s\in\chi$ and for each $k\in \N$,
there exists an $N(s,k)\in \N$ such that for each
$j\geq N(s,k)$, there exists a point
$p(j)\in \Sigma(j)\cap\B(s,\frac{1}{k})$ with $|A_{\Sigma(j)}|(p(j))\geq k$.
\item The sequence $\Sigma(n)$ has locally bounded
norm of the second fundamental form in $\rth-\chi$.

\item The sequence $\Sigma(n)$ converges $C^{\a}$,
for any $\alpha\in (0,1)$, on
compact domains of $\rth-\chi$ to a non-empty minimal
lamination ${\mathcal L}_{\chi}$ of $\rth-\chi$.

\item There exists a maximal  open horizontal slab or open
half-space $W$ in $\rth-\chi$, $\vec{0}\in W$ and
${\mathcal L}={\mathcal L}_{\chi}\cap W$ is a
non-empty minimal lamination of  $W$.

\item The leaf $L$ of ${\mathcal L} $ that passes through the
origin is non-flat and contains an
intrinsic open disk $\Omega$
passing through the origin
which is the limit of the surfaces $B_{\Sigma(n)}(\vec{0},1)$ and
$\Omega$ is a graph over its projection
to $T_{\vec{0}}$ $\Omega$ which is a vertical plane, and with
norm of the gradient of the graphing function at most one.
\een
\end{claim}

\begin{proof}[Proof of  Claim~\ref{chi}.] We begin by constructing
the set $\chi$ and the related subsequence
of surfaces $\Sigma(n)$ described in the claim.
The assumption that the original sequence
of surfaces  does not have locally bounded norm of the second
fundamental form in $\rth$ implies there
exists a point $q(1)\in \rth$ such that, after replacing this
sequence of surfaces by a subsequence
$\Gamma_1 :=\{
\Sigma(1,n)\}_{n\in \N}$, the surfaces in $\Gamma_1$ satisfy the following property:
For each $k\in \N$, there is a point
$p(1,k)\in \Sigma(1,k)\cap \B(q(1),\frac1k )$ with $|A_{\Sigma(1,k)}|(p(1,k))\geq k$.

Let $\Q$,  $\Q^+$ denote the set of rational numbers and  the
subset of positive rational numbers, respectively.
Consider the countable collection of balls
$${\mathcal B}=\{\B(x,q)\mid x\in \Q^3, \,q\in\Q^+\},$$ and let
${\mathcal B}=\{B_1,B_2,\ldots,B_n,\ldots\}$ be an ordered
listing of the elements in ${\mathcal B}$ where
$q(1)\in B_1$. If $\G_1$ has locally bounded norm of the second
fundamental form  in $\rth-\{q(1)\}$, then
$\chi=\{q(1)\}$ and we stop our construction of the set $\chi$.
Assume now that $\G_1$ does not have locally
bounded norm of the second fundamental form
in $\rth-\{q(1)\}$. Let $B_{n(2)}$ be the first indexed ball
in the ordered listing of ${\mathcal B}-\{B_1\}$, such
that the following happens: there is a point $q(2)\in B_{n(2)}-\{q(1)\}$, a subsequence
$\Gamma_2:=\{\Sigma(2,1),\Sigma(2,2),\ldots, \Sigma(2,k),\ldots\}$ of $\G_1$
together with points $p(2,k)\in \Sigma(2,k)\cap \B(q(2),\frac{1}{k})$
where $|A_{\Sigma(2,k)}| (p(2,k))\geq k$.  Note that
$B_{n(2)}$ is just the first ball in the list  ${\mathcal B}-\{B_1\}$
that contains a point $q$ different from $q(1)$
such that the norms of the second fundamental forms of the surfaces
in the sequence $\G_1$ are not bounded in any
neighborhood of $q$, and after choosing such a point, we label it as $q(2)$.

If $\G_2$ does not have locally bounded norm of the second
fundamental form in $\rth-\{q(1),q(2)\},$ then there
exists a first ball $B_{n(3)}$ in the ordered list
${\mathcal B}-\{B_1, B_{n(2)}\}$   such that there is a point
$q(3)\in [B_{n(3)}-\{q(1),q(2)\}]$ and such that after replacing
$\G_2$ by a subsequence
$\G_3:=\{\Sigma(3,1),\ldots, \Sigma(3,k),\ldots\}$, there are
points $p(3,k)\in \Sigma(3,k)\cap \B(q(3),\frac{1}{k})$
with $|A_{\Sigma(3,k)}| (p(3,k))\geq k$.

Define $n(1)=1$. Then, continuing the above construction
inductively, we obtain at the $m$-th stage a
subsequence
$\G_m=\{\Sigma(m,1),\ldots,\Sigma(m,k),\ldots\}$, $\G_m\subset \G_{m-1}$, a set of distinct
points $\{q(1),q(2),\ldots, q(m)\}$
and related balls $B_{n(1)}$, $B_{n(2)},\ldots, B_{n(m)}$ in ${\mathcal B}$
such that for each $i\in\{1,\ldots,m\}$, each
$\Sigma(m,k)$   contains points $p(m,k,i)$ in the balls $\B(q(i),\frac{1}{k})$,
where the norm of the second fundamental
form of $\Sigma(m,k)$ is at least $k$.  Note that $n(i+1)>n(i)$ for all $i$.

Let $\G_{\infty}=\{\Sigma(1,1),\Sigma(2,2),\ldots, \Sigma(n,n),\ldots\}$
be the related diagonal sequence and
let $\chi$ be the closure of the set $\{q(n)\}_{n\in\N}$.
By  the definitions of $\chi$ and of $\G_{\infty}$, for each
point $s\in \chi$ and for each $k\in \N$, there
exists $N(s,k)\in \N$ such that for $j\geq N(s,k)$, there are points
$p(j)\in \Sigma(j,j)\cap\B(s,\frac{1}{k})$ with
$|A_{\Sigma(j,j)}| (p(j))\geq k$.

We claim that the sequence $\{\Sigma(n,n)\}_{n\in \N}=\G_{\infty}$
has locally bounded norm of the second fundamental
form in $\rth-\chi$. To prove this, it suffices to check that
given a point $y\in \rth -\chi$, there exists a closed ball
$\overline{B}$ such that $y \in \Int(\overline{B})$ and
$\G_{\infty}$ has uniformly bounded norm of the second fundamental
form in $\overline{B}$. Choose an
$r\in (0,\frac{1}{4}d_{\rth}(y,\chi))\cap \Q^+$ and let $x\in\Q^3$
be a point of distance less
than $r$ from $y$. By the triangle inequality, one has
$\B(x,2r)\subset \rth-\chi$ and suppose $\B(x,2r)=B_J\in \cB$, for
some $J\in \N$. We claim that $\G_{\infty}$ has uniformly bounded norm of the second
fundamental form on $\overline{\B(y,r)}\subset B_J$.
Otherwise, by compactness of $\overline{\B(y,r)}$, there
exists a point $q\in \overline{\B(y,r)}\subset B_J$ such that the
norms of the second fundamental forms of the surfaces in the
sequence $\G_\infty$ are not bounded in any neighborhood of $q$.  By
the inductive construction then
$\{q(1),\ldots,q(J)\}\cap B_J\neq \O$; however, $B_J\subset [\rth -\chi]$
and $\{q(1),\ldots,q(J)\}\subset \chi$.
This contradiction implies that $\G_{\infty}$ has locally
bounded norm of the second fundamental form in $\rth-\chi$.

Now replace $\G_{\infty}$ by a subsequence, which  after
relabeling we denote by $\G=\Sigma(n)$,
such that the surfaces $\Sigma(n)$ converge $C^{\a}$ to a
minimal lamination ${\mathcal L}_{\chi}$ of $\rth-\chi$ for any $\alpha\in (0,1)$.
This completes the proofs of the first three items of the claim.

Recall that the intrinsic open balls
$B_{\Sigma(n)}(\vec{0}, 1)\subset \Sigma(n)$ are graphs of
functions with the norms of their gradients at most
one over their vertical tangent planes.
Hence, after refining the subsequence further, assume that the graphical
surfaces  $B_{\Sigma(n)}(\vec{0},1)$ converge smoothly to a
graph $\Omega$  over a vertical plane. In particular for
some $\delta\in (0,\frac14)$ small, depending on curvature
estimates for $H$-graphs, we can find arcs
\[
\g(n)\subset B_{\Sigma(n)}(\vec{0},1)\cap \{|x_3|\leq \delta\}
\] with one end point in the  plane $\{x_3=\delta\}$ and the other end point
in $\{x_3=-\delta\}$ and the $\delta$-neighborhood of $\g(n)$ in $\Sigma(n)$ is
contained in $B_{\Sigma(n)}(\vec{0},\frac{1}{2})$.
In particular, the curves  $\g(n)$ stay an intrinsic distance  at least $\delta$
from any points of $\Sigma(n)$ with very large curvature.

\begin{claim}\label{horizontalplane}
Let $p\in\chi$. Then, after choosing a subsequence, there is a
horizontal plane $Q(p)$ passing through $p$ such that
$Q(p)-\{p\}$ is the limit of a sequence
of  3-valued graphs $G(n)\subset \Sigma(n)-{p}$
over the annulus $A(n,\frac1n)$ with the norms of their gradients less than $\frac1n$.
\end{claim}
\begin{proof}
 Recall that
\[
\Sigma(n)=\frac{1}{r(n)}(E(n)-q_n),
\]
and as $n$ goes to infinity,  the constant mean curvatures of the surfaces
$\Sigma (n)$ are converging to zero, while the distances from $\partial \Sigma (n)$
to the origin are diverging to infinity.

By item 1 of Claim~\ref{chi}, for each $k\in \N$,
there exists an $N(k)\in \N$ such that for each $n\geq N(k)$, there exists a point
$p_k(n)\in \Sigma(n)\cap\B(p,\frac{1}{k})$ with $|A_{\Sigma(n)}|(p_k(n))\geq k$.
Let $\widetilde \Sigma(n)=\Sigma(n)-p_k(n)$ and note that
$|A_{\widetilde \Sigma(n)}|(\vec{0})\geq k$.
Without loss of generality,  we may assume that as $n$ goes to infinity,
$\partial \widetilde \Sigma(n)\subset \partial \B(R(n))$
with $\lim_{n\to \infty}R(n)=\infty$ and that its mean curvature $H(n)\in (0,\frac1{2R(n)})$.

Given $i\in \mathbb N$, let $\tau_i$ and $\ov \ve_i$ be two sequences of
positive numbers going to zero as $i$ goes to infinity and let $\ve_i$ be
a sequence of positive numbers going to infinity such that $\ve_i\slash \Omega_{\tau_i}$
is also going to infinity, where $\Omega_{\tau_i}$ is given by Theorem~\ref{mainextension}.
By the discussion at the beginning of the proof and after possibly considering $H(n)$-subdisks
of $\widetilde\Sigma(n) $ that, abusing the notation, we still call $\widetilde \Sigma(n)$,
the following holds: for each $i\in \mathbb N$, there exists $n_i\in \mathbb{N}$ such
that $|A_{\widetilde \Sigma(n_i)}|(\vec{0})\geq \max(i\omega_{\tau_i},G_{\tau_i})$,
$\partial \widetilde \Sigma(n_i)\subset \partial \B(\ve_i)$ and $H(n_i)\in(0,\frac1{2\ve_i})$,
where $\omega_{\tau_i}$ and $G_{\tau_i}$ are again obtained by applying Theorem~\ref{mainextension}.

By Theorem~\ref{mainextension} with $\eta=1$ there exist points
$p(n_i)\in \ov{\B}(\ov{\ve}_i)$ such that after translating
$\widetilde \Sigma(n_i)$ by $-p(n_i)$, the following geometric
description of $\widetilde \Sigma(n_i)$ holds:

On the scale of the norm of the second
fundamental form  $\widetilde \Sigma(n_i)$ looks like one or two helicoids nearby  the
origin and, after a rotation  that turns these helicoids into
vertical helicoids, $\widetilde \Sigma(n_i)$  contains a 3-valued graph
$u(n_i)$ over  $A(\ve_i\slash\Omega_{\tau_i},\frac{1}{i})$
with the norm of its gradient less than $\tau_i$ and with its inner boundary in
$\B(\frac{10}{i})$. Since, as $i$ goes to infinity, $\ve_i\slash\Omega_{\tau_i}$
goes to infinity, and $\tau_i$ and $\ov\ve_i$ are going to zero, the sequence of
3-valued graphs $u(n_i)$ is converging to the $(x_1,x_2)$-plane.

Thus, after possibly reindexing the elements of the sequence of surfaces $\Sigma(n)$,
choosing a subsequence and applying a rotation $\mathcal R$, the previous
discussion shows that there is a
horizontal plane $Q(p)$ passing through $p$ such that
$Q(p)-\{p\}$ is the limit of a sequence
of  3-valued graphs $G(n)\subset \Sigma(n)-{p}$
over the annulus $A(n,\frac1n)$ with the norms of their gradients less than $\frac1n$.
In order to finish the proof of the claim, it suffices to show that the rotation
$\mathcal R$ is in fact the identity, namely that the helicoids forming on the scale
of the norm of the second fundamental form are vertical. This follows using arguments
that are similar to the ones used in the proof of Claim~\ref{verthelix}. This observation finishes the proof of the claim.
\end{proof}

We now continue with the proofs of items 3 and 4 in Claim~\ref{chi}.
The previous claim gives  that in the slab
$S=\{-\frac{\delta}{2}< x_3<\frac{\delta}{2}\}$, the sequence of
surfaces $\Sigma(n)\cap S$ has locally bounded norm of the second fundamental
form. Otherwise there
would be a point $p \in \chi\cap S$ and  a horizontal plane
$Q(p)$ passing through $p$ such that $Q(p)-\{p\}$ is the limit
of 3-valued   graphs $G(n)\subset \Sigma(n)$. This is impossible
since these 3-valued graphs would intersect
the arcs $\gamma(n)$
contradicting embeddedness of the surfaces $\S(n)$.

Let ${\mathcal T}$  be the set of open horizontal slabs
containing the origin and disjoint from $\chi$. The set $\mathcal{T}$ is
non-empty as it contains $S$.  Since $\chi\neq  \O$, the union
$W=\cup_{\widehat{S}\in {\mathcal T}}\widehat{S}$ is the largest open slab
in ${\mathcal T}$  or the largest open half-space containing $S$
for which $W\cap\chi=\O$ and containing the origin. The fact that
${\mathcal L}={\mathcal L}_{\chi}\cap W$ is a non-empty lamination
of  $W$ is clear by definition of lamination and this proves item 4. The validity of
item 5 is also clear. This completes the proof of Claim~\ref{chi}.
\end{proof}

\begin{claim} \label{claim:incomplete}
The leaf $L$ given in item 5 of Claim~\ref{chi} has at most one  point of
incompleteness in each component of $\partial W$ and any such
point lies in $\chi$.  Here we refer to a point $p \in \rth$
as being a point of {\em incompleteness}
of $L$, if $p$ is the limiting end point in $\rth$ of some
proper arc $\a\colon [0,1) \to L \,$ of finite length.
\end{claim}
\begin{proof}
Since $L$ is a leaf of a lamination of  $W$, a point of
incompleteness of $L$ must lie in $\partial W$.
Let $P$ be one of the  horizontal planes in $\partial W$
and let $y_1\in P $. Suppose
that there exists some smooth,  proper, finite length  arc
$\a_{y_{1}}\colon [0,1)\to L$  with limiting end point $y_1$
and with beginning point $\a_{y_1}(0)$ in $L$; then such a
point $y_1$ is a point of incompleteness of the leaf $L$.
Note that the half-open arc $\a_{y_1}$ can be taken to be a $C^1$-limit of smooth
embedded compact arcs $\a_{y_1}^n\subset [\Sigma(n)\cap W]$ of lengths
converging to the length of $\a_{y_1}$. Let $y_1(n)\in \a_{y_1}^n$
be the sequence of end points of $\a_{y_1}^n$, which are
converging to $y_1$.

We claim that there must be positive
numbers $\tau_n$ with $\lim_{n\to\infty }\tau_n=0$, such that
$B_{\Sigma(n)}({y_1}(n),\tau_n)$ contains points of arbitrarily
large curvature as $n$ goes to $\infty$. In particular,
it would then follow that ${y_1}\in \chi$ and that we could
modify the choice of the curves $\a_{y_1}^n$ so that the
endpoints $y_1(n)$ are points with arbitrarily large norm of the second
fundamental form as $n$ goes to infinity. To see that this claim
holds, suppose to the contrary that after choosing a subsequence,
for some small fixed $\ve>0$, the norms of the second
fundamental forms of the intrinsic balls $B_{\Sigma(n)}(y_1(n),\ve)$ are bounded, and
by choosing $\ve$ smaller, we may assume that these
intrinsic balls are disks that are graphs  over their projections
to their tangent planes at $y_1(n)$ with the norms of their gradients at most 1.
Then a subsequence of these intrinsic balls
would converge to an open minimal disk $D$ with $y_1\in D$ and $D\cap L$ contains
a subarc of $\a_{y_1}$. By the maximum principle, $D$ contains points on both sides
of $P$. Since the distance from $P$ to
$\chi$ is zero, then Claim~\ref{horizontalplane} implies that $P$ is also the limit
of a sequence of 3-valued graphs $G(n)\subset \Sigma(n)$.
But then for $n$ large, this sequence $G(n)$ must intersect $D$ transversally at some
point $q_n\in D$.  Since the disks
 $B_{\Sigma(n)}(y_1(n),\ve)$ converge smoothly to $D$, then for $n$ sufficiently large,
 $G(n)$ also intersects  $B_{\Sigma(n)}(y_1(n),\ve)$
 transversely at some point.   This contradiction then proves the claim that
 ${y_1}\in \chi$ and that we could modify the choice
 of the curves $\a_{y_1}^n$ so that the
endpoints $y_1(n)$ are points with arbitrarily large norm
of the second fundamental form as $n$ goes to infinity.

 Without loss of generality, assume that $W$ lies below $P$.
Suppose that there is a point ${y_2}\in P$, ${y_2}\not=y_1$,
of incompleteness for $L$ with related proper arc
$\a_{{y_2}}$ beginning at $\a_{y_2}(0)$ with limiting end point
${y_2}$, and related approximating curves
$\a_{{y_2}}^n$ in $\Sigma(n)$ with  end points ${y_2}(n)$ converging to ${y_2}$,
where the norm of the second fundamental form is
arbitrarily large. Also assume that $\a_{y_1},\a_{y_2}$ are
sufficiently close to their different limiting
end points so that, after possibly replacing them by subarcs,
they are contained in closed balls in $\rth$
that are a positive distance from
each other. By Claim~\ref{horizontalplane} there exist sequences
of 3-valued graphs
$G_1(n)$, $G_2(n)$ in $\Sigma(n)$ with inner boundary curves
converging to the points $y_1(n)$, ${y_2}(n)$,
respectively, and these 3-valued graphs can be
chosen so that each of them collapses to the
punctured plane $P$ punctured at $y_1,y_2$, respectively. For $n$ large,  $G_1(n)$
must lie ``above'' the point $\a_{y_2}^n(0)$ which implies that
near $y_2(n)$, $G_1(n)$ must lie above $G_2(n)$ as well,
otherwise, $G_1(n)$ would intersect the arc $\a_{y_2}^n$, which
would contradict the embeddedness of $\S(n)$. Reversing the roles
of $G_1(n)$ and $G_2(n)$, we find that $G_2(n)$ must lie above $G_1(n)$ near $y_1(n)$;
hence, for $n$ sufficiently large, the multigraph $G_1(n)$ must intersect $G_2(n)$.
This contradicts that $\S(n)$ is embedded and thereby proves that there is at most one point
$y\in P\cap \chi$ that is a point of incompleteness of $L$.
\end{proof}

So far we have shown that $L$ can have at most one point of incompleteness on each of the
components of $\partial W$ and all the points of incompleteness are contained
in $\partial W$. Define {\bf S} to be the set of points of incompleteness of $L$.
By the previous claim, the possibly empty set ${\bf S}$ has at most two points.
The next claim describes some consequences of this finiteness of {\bf S}.

\begin{claim} \label{claim3.5}
The leaf $L$ has genus zero and the closure $\ov{L}$ of $L$ in
$\rth- {\bf S}$ is a minimal
lamination  of $\rth -{\bf S}$ that is contained in $\ov{W}-{\bf S}$.
\end{claim}

\begin{proof}
We first prove that $L$ is not a limit leaf of ${\mathcal L}$.
If $L$ were a limit leaf of ${\mathcal L}$,
then its oriented 1 or 2-sheeted cover  would be stable
by Theorem~4.3 in~\cite{mpr19} or Theorem~1 in~\cite{mpr18} by Meeks, Perez and Ros.
Stability gives curvature estimates on $L$ away from ${\bf S}$
and thus the closure $\overline{L}$ of $L$ in
$\rth -{\bf S}$,  is a minimal lamination of $\rth -{\bf S}$. By
Corollary~7.2 in~\cite{mpr10} by Meeks, Perez and Ros, the closure of $\overline{L}$
in $\rth$ is a minimal lamination of $\rth$ and
each leaf is stable; in particular, by stability the closure of $\overline{L}$ in
$\rth$ must be a plane.
This is a contradiction because $L$ lies in a horizontal slab
and its tangent plane at the origin is vertical.

Let $\wt W\subset W$ be the smallest open slab or half-space containing $L$. We claim that $L$
is properly embedded in  $\wt W$. If not, then the closure $\ov L$ of $L$ in $\wt W$ is
a sublamination of $\cL$ with a limit leaf $T$. Note that since $\wt W$ is the
smallest open slab or half-space in $W$ containing $L$, $T$ cannot be a plane. The proof of
Claim~\ref{claim:incomplete} applies to $T$ to show that it has
at most one point of incompleteness on each component of $\partial W$
and any such point lies in $\chi$. By the arguments in the previous
paragraph, $T$ must be a plane. This is a contradiction and therefore $L$
is properly embedded in  $\wt W$.

Since $L$  is
properly embedded in a simply-connected open set
of $\rth$, then it separates the open set and so it is orientable.
As $L$ is not stable, the convergence
of portions of $\Sigma(n)$ to $L$ has multiplicity one or two
which implies that $L$ has genus zero and zero
flux; see the discussion when Case A holds for further details
on this multiplicity of convergence bound and the
genus-zero and zero-flux properties.

It remains to prove that the closure  $\ov{L}$ of $L$ in
$\rth- {\bf S}$ is a minimal lamination  of $\rth -{\bf S}$.
By the minimal lamination closure theorem by Meeks and Rosenberg in~\cite{mr13} (specifically see
Remark 2 at the end of this paper),
this desired result is equivalent to proving that the
injectivity radius function of $L$ is bounded away from zero
on compact subsets of $\rth-{\bf S}$.
Otherwise,
the local picture theorem on the scale of
the topology by Meeks, Perez and Ros in~\cite{mpr14} implies that there exists a sequence of
compact domains $\Delta_n\subset L$ such that related homothetically scaled and translated domains
$\wt{\Delta}_n$ converge
smoothly with multiplicity one to a  properly embedded genus-zero
minimal surface $L_\infty$ in $\rth$ with injectivity
radius one or there exist closed geodesics
$\gamma_n \subset \wt{\Delta}_n$ with nontrivial flux.
So, the latter cannot happen as $L$ has zero flux. On the other hand, if the
former happens then,  by the classification of non-simply-connected, properly embedded minimal
planar domains in $\rth$ in \cite{mpr6} by Meeks, Perez and Ros, the surface
$L_\infty$ is a catenoid or a Riemann minimal example and so has non-zero flux; therefore
$L_\infty$  cannot be the limit with multiplicity one of translated and scaled domains in
$L$. In either case, we have obtained a contradiction, which proves
that the injectivity radius function of $L$ is bounded away
from zero on compact subsets of $\rth- {\bf S}$.
This completes the proof of the claim.
\end{proof}

Finally we prove that Case~B cannot occur. By Claim~\ref{claim3.5},
the closure $\ov{L}$ of $L$ in $\rth- {\bf S}$
is a minimal lamination; in the technical language developed in~\cite{mpr11} by Meeks, Perez and Ros,
$\ov{L}$ is a singular minimal lamination of $\rth$  with a countable
(in fact at most two) set of singular points and its regular part
in  $\rth- {\bf S}$ contains
the genus-zero leaf $L$.
By item~6 of Theorem~1.8 in~\cite{mpr11},
the closure of $L$ in $\rth$ must be a properly embedded
minimal surface in $\rth$. Since $L$ is non-flat and is
contained in the open slab or half-space  $W$,  the half-space
theorem in~\cite{hm10} applied to $\ov{L}$ gives a contradiction, which  proves that
Case~B cannot occur. This completes the proof of Assertion~\ref{case b}. \end{proof}

Since Case B does not occur, then Case~A must occur. We have already proven
Proposition~\ref{vert} when Case A holds, thus,
the proof of Proposition~\ref{vert} is finished.
\end{proof}

 The next corollary follows immediately from Proposition~\ref{vert}.

\begin{corollary}\label{lookslikehel}  Given $\ve_1\in (0,1/2)$, there exist $\ve_2>0$
such that the following holds. Let $E$ be an $H$-disk satisfying $$E\cap \B(1)\cap\{x_3=0\}
=\O \quad \mbox{and} \quad \partial E\cap \B(1)\cap\{x_3>0\}=\O.$$
Suppose
$p\in E\cap \B(\ve_2)\cap\{x_3>0\}$ where the
tangent plane to $E$ is vertical. Then
there exists a vertical helicoid ${\mathcal H}$ with maximal
absolute Gaussian curvature
$\frac12$ at the origin such that the connected component of
$|A_{E}|(p)[E-p]\cap \B(1)$
containing the origin is a normal graph $u$ over its projection
$\Omega\subset[\B(1+2\ve_1)\cap \mathcal H]$,
 where $\Omega\supset[\B(1-2\ve_1)\cap \mathcal H]$ and
 $\|u\|_{C^2}\leq \ve_1$.
Furthermore, $|A_{E}|(p)[E-p]\cap \B(1)$
consists of 1 or 2 components and if there are two components, then the component
not passing through the origin is a normal graph $u'$
over its projection $\Omega'\subset[\B(1+2\ve_1)\cap \mathcal H]$,
 where $\Omega'\supset[\B(1-2\ve_1)\cap \mathcal H]$ and $\|u'\|_{C^2}\leq \ve_1$.
\end{corollary}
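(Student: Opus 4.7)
The plan is a short compactness-and-contradiction argument whose sole substantive input is Proposition~\ref{vert}. First I would suppose the conclusion fails for some $\ve_1\in(0,1/2)$; then for each $n\in\N$ one obtains an $H_n$-disk $E(n)$ satisfying the hypotheses with $R=1$, together with a point $p_n\in E(n)\cap\B(1/n)\cap\{x_3>0\}$ with vertical tangent plane at which the claimed graphical description fails. Since $p_n\to\vec 0$, for $n$ large $p_n\in C(\tfrac12,\tfrac12)\cap\{x_3>0\}$ and $x_3(p_n)\to 0$, so the sequence $q_n:=p_n$ is admissible in Proposition~\ref{vert}.

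Next I would apply Proposition~\ref{vert} to $\{q_n\}$ to extract a subsequence along which $\lambda_n:=|A_{E(n)}|(p_n)\to\infty$ and the rescaled surfaces $M(n):=\lambda_n(E(n)-p_n)$ converge on compact subsets of $\rth$, with multiplicity $m\in\{1,2\}$, to a vertical helicoid $\mathcal{H}$ containing the $x_3$-axis whose maximal absolute Gaussian curvature at the origin is $\tfrac12$. In particular this convergence is smooth on the compact set $\ov\B(1+2\ve_1)$.

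From smooth multiplicity-$m$ convergence on $\ov\B(1+2\ve_1)$, together with the fact that $\mathcal{H}\cap\ov\B(1+2\ve_1)$ is a smooth, compact, connected piece of a helicoid with uniformly bounded geometry, a routine unfolding via the tubular neighborhood of $\mathcal{H}$ gives that for $n$ sufficiently large $M(n)\cap\B(1)$ consists of exactly $m$ components, each a normal graph $u_n$ over a domain $\Omega_n\subset\mathcal{H}\cap\B(1+2\ve_1)$ with $\Omega_n\supset\mathcal{H}\cap\B(1-2\ve_1)$ and $\|u_n\|_{C^2}\le\ve_1$. Since $\vec 0\in M(n)\cap\mathcal{H}$ for every $n$, the component through the origin is of the claimed form, and the second component, if it exists, satisfies the analogous estimate. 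This contradicts the supposed failure of the corollary for $(E(n),p_n)$.

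The main (and essentially only) obstacle is checking that Proposition~\ref{vert} really applies with $q_n:=p_n$, namely that $p_n$ lands in the cylinder $C(\tfrac12,\tfrac12)\cap\{x_3>0\}$ with $x_3(p_n)\to 0$; this is immediate from $p_n\in\B(1/n)$. The conversion of compact-set smooth convergence into the specific $C^2$-normal-graph bounds on $\B(1)$ is routine and uses nothing beyond the implicit function theorem applied to the orthogonal projection onto $\mathcal{H}$.
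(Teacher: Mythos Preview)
Your proposal is correct and matches the paper's approach: the paper simply asserts that the corollary ``follows immediately from Proposition~\ref{vert},'' and your contradiction argument is the natural way to unwind that assertion. The one point worth flagging is that Proposition~\ref{vert} is stated for the particular sequence $E(n)$ arising from the contradiction hypothesis in the proof of Theorem~\ref{th}, whereas you apply it to a new counterexample sequence; but the proof of Proposition~\ref{vert} uses only the standing hypotheses on $E(n)$ (together with the derived facts that one may pass to subdisks with $\partial E(n)\subset\partial\B(1)$ and that the $H_n$ are bounded via Theorem~\ref{rest}), so this transfer is legitimate and is exactly what the paper intends.
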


As a consequence of Proposition~\ref{vert} and
Corollary~\ref{lookslikehel}, we obtain the next claim.

\begin{claim} \label{no_vert} There exists an $\ve>0$ such
that the following holds. If $E$ is an $H$-disk satisfying
the hypotheses of Theorem~\ref{th} for $R=1$, then $E\cap \B(\ve)\cap \{x_3>0\}$
contains no vertical tangent planes.
\end{claim}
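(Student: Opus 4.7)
My approach is to argue by contradiction using a minimality argument: if every small ball around the origin contained a vertical tangent plane point of $E$, I would find near the minimal-height such point a strictly lower one, via the helicoid approximation from Proposition~\ref{vert}.

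Concretely, I would first negate the claim to obtain, for each $n\in\N$, an $H$-disk $E(n)$ with the stated hypotheses and a vertical tangent plane in $E(n)\cap\B(1/n)\cap\{x_3>0\}$. The locus $S(n)\subset E(n)$ of vertical tangent plane points is closed, so $K(n):=S(n)\cap\overline{\B(1/n)}\cap\{x_3\ge 0\}$ is compact and, by assumption, non-empty, whence $x_3$ attains a minimum at some $q_n\in K(n)$; since $\overline{\B(1/n)}\subset\B(1)$ and $E(n)\cap\B(1)\cap\{x_3=0\}=\emptyset$, necessarily $x_3(q_n)>0$. Because $x_3(q_n)\le 1/n\to 0$ and $q_n\in C(1/2,1/2)\cap\{x_3>0\}$, Proposition~\ref{vert} applies to $\{(E(n),q_n)\}$: after passing to a subsequence, $\lambda_n:=|A_{E(n)}|(q_n)\to\infty$ and $M(n):=\lambda_n(E(n)-q_n)$ converges smoothly on compact subsets of $\rth$ to a vertical helicoid $\mathcal H$ with axis the $x_3$-axis and maximal absolute Gaussian curvature $\tfrac12$ at the origin. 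Inspection of the proof of Proposition~\ref{vert} also yields the crucial lower bound $\lambda_n x_3(q_n)\ge\lambda/4$ for $n$ large, where $\lambda=\lim\lambda_n r(n)>0$, using the relation $r(n)\le 2x_3(q_n)$ between the graph radius $r(n)$ and the height.

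The heart of the argument is to produce a point $p_n\in K(n)$ with $x_3(p_n)<x_3(q_n)$, contradicting minimality. Fix $\alpha\in(0,\lambda/8)$. For $n$ large (so that $q_n\in\B(\ve_2)$), Corollary~\ref{lookslikehel} represents the component of $M(n)\cap\B(1)$ through the origin as a normal graph over $\mathcal H\cap\B(1-2\ve_1)$ with small $C^2$ norm, and the vertical tangent plane locus of this graph is a $C^1$-small perturbation of the $x_3$-axis of $\mathcal H$. Consequently there exists $\widetilde p_n\in M(n)$ with vertical tangent plane satisfying $|\widetilde p_n-(0,0,-\alpha/2)|\to 0$, and passing to a further subsequence I may assume $|\widetilde p_n-(0,0,-\alpha/2)|<1/\lambda_n^2$. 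Setting $p_n:=q_n+\widetilde p_n/\lambda_n\in E(n)$, the tangent plane at $p_n$ is still vertical, and a direct computation gives $x_3(p_n)<x_3(q_n)$ with $x_3(p_n)>0$ (using $\lambda_n x_3(q_n)>\alpha$), and $|p_n|^2-|q_n|^2\le-\alpha x_3(q_n)/\lambda_n+O(1/\lambda_n^2)<0$ (using $\lambda_n x_3(q_n)\ge\lambda/4>2\alpha$). Hence $p_n\in K(n)$ has strictly smaller $x_3$, contradicting the minimality of $q_n$ and establishing the claim for $\ve$ sufficiently small.

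The principal obstacle I anticipate is the careful bookkeeping of error terms in the inequality $|p_n|<|q_n|$: the negative cross term $2\langle q_n,\widetilde p_n\rangle/\lambda_n\approx-\alpha x_3(q_n)/\lambda_n$ must dominate both the positive quadratic term $|\widetilde p_n|^2/\lambda_n^2\approx\alpha^2/(4\lambda_n^2)$ and the perturbation errors arising from $\eta(n):=|\widetilde p_n-(0,0,-\alpha/2)|$, which are of order $\eta(n)/(n\lambda_n)$ and $\eta(n)/\lambda_n^2$. The lower bound $\lambda_n x_3(q_n)\ge\lambda/4$ combined with the choice $\alpha<\lambda/8$ and a sufficiently rapidly converging subsequence (for instance $\eta(n)<1/\lambda_n^2$) is precisely what makes the descent term dominate; without the bound $\lambda_n x_3(q_n)\gtrsim 1$ from Proposition~\ref{vert}, the calculation fails.
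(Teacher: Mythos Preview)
Your strategy and the paper's share the same geometric core: Corollary~\ref{lookslikehel} forces the vertical-tangent-plane locus near any such point to be an almost vertical curve, so it must descend to $\{x_3=0\}$. The paper implements this directly by following the connected curve $\Gamma_n(\delta)\subset E(n)$ of vertical tangent planes: by Corollary~\ref{lookslikehel} one may choose $\delta$ so that the unit tangent of $\Gamma_n(\delta)$ satisfies $|\langle\dot\Gamma_n,(0,0,-1)\rangle|>1-\rho$ everywhere in $\B(\delta)\cap\{x_3>0\}$; starting at $p(n)\in\B(1/n)$ and integrating downward, the horizontal drift is at most $\sqrt{2\rho}$ times the arc length, so the curve reaches $\{x_3=0\}$ while still inside $\B(\delta)$, contradicting $E(n)\cap\B(1)\cap\{x_3=0\}=\O$. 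No comparison of rates is needed because the verticality bound is \emph{uniform along the whole curve}.

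Your discrete descent argument, by contrast, hinges on the single-step inequality $|p_n|<|q_n|$, and here there is a genuine gap. The problematic term is the cross error $2\langle q_n,e_n\rangle/\lambda_n$ with $|e_n|=\eta(n)$, which is bounded only by $2|q_n|\eta(n)/\lambda_n\le 2\eta(n)/(n\lambda_n)$. For this to be dominated by the descent term $\alpha x_3(q_n)/\lambda_n\ge \alpha\lambda/(4\lambda_n^{2})$ you need $\eta(n)\lambda_n/n$ bounded. But $\eta(n)$ and $\lambda_n$ are both determined by the pair $(E(n),q_n)$; passing to a subsequence merely selects indices and cannot force a relation such as $\eta(n)<1/\lambda_n^{2}$ unless you already know $\liminf \eta(n)\lambda_n^{2}<\infty$. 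In fact your own bound $\lambda_n x_3(q_n)\ge\lambda/4$ together with $x_3(q_n)\le 1/n$ gives $\lambda_n\ge n\lambda/4$, and nothing prevents $\lambda_n/n\to\infty$ (e.g.\ if $x_3(q_n)\ll 1/n$); in that regime $\eta(n)\lambda_n/n$ need not tend to zero even though $\eta(n)\to 0$. Thus $|p_n|<|q_n|$ is not established, and $p_n$ may escape $\overline\B(1/n)$, so the minimality contradiction does not close. The paper's curve-following argument sidesteps this because the total horizontal drift is controlled by the \emph{uniform} slope bound $\rho$, not by a single rescaled step whose error competes with an uncontrolled curvature scale.
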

\begin{proof}
Arguing by contradiction, let $E(n)$ be a sequence of
$H_n$-disks satisfying the hypothesis of
Theorem~\ref{th}  for $R=1$, together with a sequence of points $p(n)$
in $E(n)\cap \B(\frac{1}{n})\cap \{x_3>0\}$ with vertical
tangent planes. Given $\delta>0$,  let $\Gamma_n(\de)$
be the connected component of
$$E(n)\cap \B(\delta)\cap \{x_3>0\}\cap N_n^{-1}(\{x_1^2+x_2^2=1\})$$
containing $p(n)$, where $N_n$ is  the Gauss
map of $E(n)$. It follows from Corollary~\ref{lookslikehel}
that given $\rho\in (0, 1/2)$, there exists
$\delta>0$ such that for $n$ large, $\Gamma_n(\de)$ is a possibly disconnected
analytic curve that can be parameterized to have unit
speed and so that $|\langle \dot \Gamma_n(\de),(0,0,-1)\rangle|>1-\rho$. By taking
$\rho$ sufficiently small, and for $n$ sufficiently large, this curve
crosses the $(x_1,x_2)$-plane nearby the origin.
This is impossible because the
disks $E(n)$ are disjoint from $\B(1)\cap \{x_3=0\}$. This
contradiction completes the proof of the claim.
\end{proof}

To complete the proof of Theorem~\ref{th}, recall that at the
beginning of the proof of Theorem~\ref{th}, we showed
that if the theorem fails, then there exists a sequence of
$H_n$-disks $E(n)$ with $\partial E(n)\subset \partial \B(1)$ and
$H_n\leq 1$ satisfying the hypotheses
of Theorem~\ref{th} and points $p_k(n)\in E(n)$ with vertical
tangent plane and converging to the origin.  This contradicts
Claim~\ref{no_vert} and the proof of  Theorem~\ref{th} is completed.
\end{proof}

The next corollary follows immediately from   Theorem~\ref{th}
by a simple rescaling argument. It roughly states that
we can replace the  $(x_1,x_2)$-plane   by any surface that has a fixed uniform
estimate on the norm of its second fundamental form.

\begin{corollary} \label{cest2}
Given $a\geq 0$,
there exist $\ve\in(0,\frac{1}{2})$ and $C_{a} >0$ such that
for any $R>0$, the following holds.
Let $\Delta$ be a compact immersed surface in $\B(R)$ with
$\partial \Delta \subset \partial \B(R)$, $\vec{0}\in \Delta$
and satisfying $|A_{\Delta}| \leq a/R$. Let $\cD$ be an $H$-disk such that
$$\cD\cap \B(R)\cap\Delta=\O \quad \mbox{and} \quad \partial \cD\cap \B(R)=\O.$$
Then:
\begin{equation} \label{eq1*}
\sup _{x\in \cD\cap \B(\ve R)} |A_{\cD}|(x)\leq \frac{C_{a}}{R}.
\end{equation} In particular, if $\cD\cap \B(\ve R)\neq \O$,
then $H\leq \frac{C_{a}}{R}$.
\end{corollary}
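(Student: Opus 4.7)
The plan is to deduce the corollary from Theorem~\ref{th} by substituting, near $\vec 0$, a horizontal plane sitting slightly off the tangent plane to $\Delta$.

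First I would scale by $1/R$ to reduce to $R=1$, so the hypotheses become $|A_{\Delta}|\le a$, $\cD\cap\B(1)\cap\Delta=\emptyset$, and $\partial\cD\cap\B(1)=\emptyset$. After rotating I assume $T_{\vec 0}\Delta=\{x_3=0\}$. I then invoke the standard quantitative graphical decomposition for surfaces with bounded second fundamental form: there exist $r_0=r_0(a)\in(0,1)$ and $\eta_0=\eta_0(a)>0$, with $\eta_0$ arbitrarily small by shrinking $r_0$, such that the sheet $\Delta_0$ of $\Delta$ through $\vec 0$ restricted to $\B(r_0)$ is a graph $x_3=u(x_1,x_2)$ with $|u|\le \eta_0 r_0$ and $|\nabla u|\le \tfrac12$. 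This sheet is a topological disk with boundary on $\partial\B(r_0)$, so it separates $\B(r_0)$ into an ``above'' region $\{x_3>u\}$ and a ``below'' region $\{x_3<u\}$.

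Next I fix $\eta_0$ so small that $\eta_0<\varepsilon/(2(1+\varepsilon))$, where $\varepsilon$ is the constant from Theorem~\ref{th}, and take the corresponding $r_0=r_0(a)$. For $x\in \cD\cap\B(r_0)$, let $\cC_x$ be the component of $\cD\cap\B(r_0)$ containing $x$. Since $\partial\cD\cap\B(1)=\emptyset$, $\cC_x$ is a subdisk of $\cD$ whose boundary sits on $\partial\B(r_0)$, so it is itself an $H$-disk. Disjointness from $\Delta\supset\Delta_0$ combined with the separation property forces $\cC_x$ to lie entirely above or entirely below $\Delta_0$.

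Suppose first that $\cC_x$ lies above $\Delta_0$. Then $\cC_x$ is disjoint in $\B(r_0)$ from the horizontal plane $\Pi=\{x_3=-2\eta_0 r_0\}$, which sits strictly below $\Delta_0$. Translating so that $\Pi$ becomes $\{x_3=0\}$ and applying Theorem~\ref{th} to $\cC_x$ in the translated ball of radius $r_0(1-2\eta_0)$, centered at $p=(0,0,-2\eta_0 r_0)$ (the largest ball around $p$ inscribed in $\B(r_0)$), yields $|A_{\cC_x}|(x)\le C/(r_0(1-2\eta_0))$ as soon as $x\in\B(p,\varepsilon r_0(1-2\eta_0))\cap\{x_3>-2\eta_0 r_0\}$. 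The $x_3$-condition is automatic since $x$ sits above $\Delta_0$; the ball condition reduces to $|x|+2\eta_0 r_0<\varepsilon r_0(1-2\eta_0)$, which holds for every $x\in\B(\varepsilon_a)$ provided $\varepsilon_a<r_0(\varepsilon(1-2\eta_0)-2\eta_0)$, a quantity that is positive by the choice of $\eta_0$. Setting $C_a:=C/(r_0(1-2\eta_0))$ gives the stated estimate, and the pointwise inequality $H\le |A_{\cD}|/\sqrt 2$ then yields the bound on $H$. The case where $\cC_x$ lies below $\Delta_0$ is identical after reflection in the plane $\{x_3=0\}$.

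The only nontrivial step is the quantitative local graphical decomposition of $\Delta$ in the first paragraph; this is standard and follows, for instance, by integrating the unit normal along intrinsic geodesics emanating from $\vec 0$ using $|A_{\Delta}|\le a$, and I expect this to be the only place where the constants need to be chased carefully to ensure all required inequalities hold simultaneously.
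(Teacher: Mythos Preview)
Your overall strategy---rescale to $R=1$, rotate so that $T_{\vec 0}\Delta=\{x_3=0\}$, use the bound $|A_\Delta|\le a$ to write the sheet $\Delta_0$ through the origin as a small graph over its tangent plane in $\B(r_0)$, and then apply Theorem~\ref{th} with a horizontal plane placed just below (or above) $\Delta_0$---is exactly what the paper has in mind when it says the corollary ``follows immediately from Theorem~\ref{th} by a simple rescaling argument.'' The graphical decomposition of $\Delta$ and the choice of constants are handled correctly.

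There is, however, a genuine gap. You assert that the component $\cC_x$ of $\cD\cap\B(r_0)$ through $x$ ``is a subdisk of $\cD$ \ldots\ so it is itself an $H$-disk.'' This is false in general: a component of the intersection of an embedded disk with an open ball is a planar domain, but it need not be simply-connected. (For a model example, take a graphical disk with a tall narrow bump at the origin and intersect with a ball whose boundary sphere cuts the side of the bump in two circles; the resulting component is an annulus.) Since Theorem~\ref{th} is stated and proved only for $H$-\emph{disks}, you cannot apply it to $\cC_x$ directly. The natural attempt to repair this---replace $\cC_x$ by the subdisk $D_0\subset\cD$ bounded by the outermost boundary circle $\gamma_0$ of $\cC_x$---gives an honest $H$-disk with $\partial D_0\subset\partial\B(r_0)$, but now $D_0\cap\B(r_0)$ may contain components other than $\cC_x$, and those components can lie in $U^-$ and meet your plane $\Pi=\{x_3=-2\eta_0 r_0\}$; so the hypothesis $D_0\cap\B(p,\rho)\cap\Pi=\emptyset$ of Theorem~\ref{th} can fail. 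Thus neither $\cC_x$ nor $D_0$ fits the bill without further argument. One way to close the gap is to run a short contradiction argument: if the estimate failed, a blow-up sequence would produce $H_n$-disks with boundary outside a fixed ball, disjoint from surfaces $\Delta_{n,0}$ converging in $C^1$ to $\{x_3=0\}$, and with curvature blowing up at points converging to $\vec 0$; this feeds directly into the mechanism of the proof of Theorem~\ref{th} (in particular Claim~\ref{no_vert}) with $\{x_3=0\}$ replaced by $\Delta_{n,0}$. Alternatively one can argue inductively on the number of components of $D_0\cap\B(r_0)$. Either route is routine, but something beyond ``$\cC_x$ is a disk'' is needed.
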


\section{Consequences of the one-sided curvature estimate.} \label{sec:consequences}

In this section we state several theorems that depend on  the
one-sided curvature estimate
in Theorem~\ref{th}. We begin by making the following definition.

\begin{definition} Given a point $p$ on a surface
$\Sigma\subset \rth$, $\S (p,R)$ denotes the closure of
 the component of $\Sigma \cap {\B}(p,R)$ passing through $p$.
\end{definition}

In~\cite{mt8}, we apply the one-sided curvature estimate in
Theorem~\ref{th} to prove a relation between intrinsic
and extrinsic distances in an $H$-disk, which can be
viewed as a {\em weak chord arc} property.
This result was motivated by and generalizes Proposition~1.1 in~\cite{cm35} by
Colding and Minicozzi. More precisely, the statement is the following.

\begin{theorem}[Weak chord arc property, Theorem 1.2
in~\cite{mt8}] \label{thm1.1} There exists a $\delta_1 \in (0,
\frac{1}{2})$  such that the following holds.

Let $\S$ be an   $H$-disk in $\rth.$  Then for all
intrinsic closed balls $\ov{B}_\S(x,R)$ in $\S-
\partial \S$:

\ben \item $\S (x,\delta_1 R)$ is a disk with piecewise smooth boundary
$\partial \Sigma(\vec{0},\delta_1 R)\subset \partial \B(\de_1R)$. \item
$
 \S (x, \delta_1 R) \subset B_\S (x, \frac{R}{2}).$
\een
\end{theorem}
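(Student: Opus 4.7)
The plan is to argue by contradiction after a homothetic rescaling, invoking the one-sided curvature estimate --- specifically its flexible generalization Corollary~\ref{cest2}, which admits a bounded-curvature surface in place of a plane as the barrier --- to extract uniform second fundamental form bounds incompatible with the hypothesized failure. Under the homothety $\Sigma \mapsto \frac{1}{R}(\Sigma - x)$ both conclusions rescale correctly, so it suffices to take $x = \vec 0$ and $R = 1$. Suppose no universal $\delta_1 \in (0,\tfrac12)$ works. I extract a sequence of $H_n$-disks $\Sigma_n$ with $\overline B_{\Sigma_n}(\vec 0, 1) \cap \partial \Sigma_n = \emptyset$ and numbers $\delta_n \downarrow 0$ for which either (a) $\Sigma_n(\vec 0, \delta_n)$ is not a topological disk with piecewise smooth boundary in $\partial \mathbb B(\delta_n)$, or (b) there is $p_n \in \Sigma_n(\vec 0, \delta_n)$ with $d_{\Sigma_n}(\vec 0, p_n) > 1/2$. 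I will treat case (b) first, as it contains the essential geometric obstruction.

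In case (b), there is a connected arc $\eta_n \subset \Sigma_n \cap \overline{\mathbb B}(\delta_n)$ from $\vec 0$ to $p_n$, and its length is at least $d_{\Sigma_n}(\vec 0, p_n) > 1/2$; thus $\Sigma_n$ folds tightly inside the shrinking ball $\mathbb B(\delta_n)$. The key step is to exploit the folding to produce, near $\vec 0$, two disjoint subsheets $S_n^1, S_n^2 \subset \Sigma_n$ at scale $\delta_n$ with nearly parallel tangent planes and with a universal curvature bound $|A_{S_n^2}| \leq a/\delta_n$. Such sheets must exist because embeddedness of $\Sigma_n$ forces any two close passes of $\eta_n$ to lie on locally opposite sides of each other; the curvature bound on the barrier sheet $S_n^2$ is supplied by Theorem~\ref{ext:cest} when $H_n$ is bounded below, and otherwise by Proposition~\ref{vert} and Corollary~\ref{lookslikehel}, which describe $\Sigma_n$ near points of vertical tangent plane as a rescaled vertical helicoid containing controlled graphical annuli. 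Corollary~\ref{cest2}, applied with $\Delta = S_n^2$ and $\cD$ a subdisk of $\Sigma_n$ disjoint from $S_n^2$ in a ball of radius comparable to $\delta_n$ and containing a prescribed point near $\vec 0$, then yields a uniform estimate $|A_{\Sigma_n}| \leq C/\delta_n$ on an extrinsic ball of radius $\varepsilon \delta_n$ about $\vec 0$ for universal $\varepsilon, C$.

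Such a bound forces an intrinsic ball in $\Sigma_n$ of radius comparable to $\delta_n$ around $\vec 0$ to be a bounded-gradient graph over its tangent plane, of extrinsic size $O(\delta_n)$ and hence intrinsic diameter tending to $0$. This contradicts the existence of $\eta_n$ of length at least $1/2$ inside $\mathbb B(\delta_n)$, eliminating case (b). For case (a), once (b) is known, $\Sigma_n(\vec 0, \delta_n) \subset B_{\Sigma_n}(\vec 0, 1/2)$ is a simply connected subdomain of the disk $\Sigma_n$; choosing $\delta_n$ to be a regular value of the radial function $|\cdot|$ on $\Sigma_n$ (after a small Sard-type perturbation) makes $\partial \Sigma_n(\vec 0, \delta_n) \subset \partial \mathbb B(\delta_n)$ piecewise smooth and $\Sigma_n(\vec 0, \delta_n)$ a topological disk, contradicting (a).

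The main obstacle is the sheet-selection step: one must quantify the heuristic that a long arc confined to a small ball must revisit small neighborhoods, and then show that embeddedness forces the resulting close passes to be nearly parallel graphs of controlled curvature --- strong enough to serve as the bounded-curvature barrier required by Corollary~\ref{cest2}. The two subcases determined by the behavior of $H_n$ require different routes to this curvature control on $S_n^2$, but the contradiction mechanism itself is uniform and reduces to a single invocation of Corollary~\ref{cest2}.
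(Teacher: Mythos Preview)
The paper does not prove this theorem; it is only stated here and its proof is deferred to the companion paper~\cite{mt8}. So there is no proof in the present paper to compare your attempt against.

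Evaluating your sketch on its own merits: the overall strategy---rescale, assume a long intrinsic arc is trapped in a shrinking extrinsic ball, locate a bounded-curvature sheet to serve as the barrier $\Delta$ in Corollary~\ref{cest2}, and derive a contradiction---is indeed the backbone of the argument in~\cite{mt8}. But the step you yourself flag as ``the main obstacle,'' the sheet-selection, is not actually carried out, and as written it is circular. To apply Corollary~\ref{cest2} you need a compact surface $\Delta$ through the center of a ball of radius comparable to $\delta_n$, with $\partial\Delta\subset\partial\B$ and $|A_\Delta|\le a/\delta_n$; producing such a $\Delta$ from a piece of $\Sigma_n$ already requires curvature control at that scale, which is exactly what you are trying to establish. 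Your invocation of Theorem~\ref{ext:cest} when $H_n$ is bounded below does not supply this: that theorem bounds $|A|$ at points of \emph{extrinsic} distance $\ge\delta$ from $\partial\Sigma_n$, and you only know the \emph{intrinsic} distance from $\vec 0$ to $\partial\Sigma_n$ is at least $1$. Likewise, Proposition~\ref{vert} and Corollary~\ref{lookslikehel} are proved under the specific one-sided hypothesis of Theorem~\ref{th} (disjointness from $\{x_3=0\}$ in $\B(1)$) and at points with vertical tangent plane near that plane; neither hypothesis is available in your setup, so those results cannot be quoted directly.

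Your treatment of case~(a) is also incomplete: containment $\Sigma_n(\vec 0,\delta_n)\subset B_{\Sigma_n}(\vec 0,1/2)$ only places the set inside a topological disk, and a connected subdomain of a disk need not itself be simply connected. For $H>0$ the convex hull property fails, so the usual minimal-surface argument (a loop in $\Sigma_n(\vec 0,\delta_n)$ bounds a disk in $\Sigma_n$ which must stay in $\overline\B(\delta_n)$) does not go through without an additional barrier or mean-curvature comparison argument, which you have not supplied.
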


 By the relation between extrinsic and
intrinsic distances described in Theorem~\ref{thm1.1}, the extrinsic radius and curvature
estimates given in Theorems~\ref{rest} and~\ref{ext:cest},
that were used to prove Theorem~\ref{thm1.1}, become
radius and curvature estimates
that depend only on intrinsic distances; see~\cite{mt7}
for details on the proofs of the two theorems below.

In the next theorem the {\em radius} of a compact Riemannian surface $\Sigma$ with
boundary is the maximum intrinsic distance of points in the surface to its boundary;
in the second theorem, $d_{\S}$ denotes the intrinsic distance function of $\S$.

\begin{theorem}[Radius estimates, Theorem 1.2
in~\cite{mt7}] \label{rest3} There exists an
${\mathcal R}\geq \pi$ such that any compact disk
embedded in $\rth$ of constant mean curvature
$H>0$ has radius less than ${\mathcal R}/H$.
\end{theorem}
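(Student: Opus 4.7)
The plan is to derive Theorem~\ref{rest3} directly from the weak chord arc property (Theorem~\ref{thm1.1}) together with the extrinsic radius estimate (Theorem~\ref{rest}); the one-sided curvature estimate (Theorem~\ref{th}) enters only through its role in establishing the chord arc theorem. The idea is that chord arc lets us convert an intrinsic ball into an honest sub-$H$-disk whose boundary sits on a round sphere, and then the extrinsic radius bound forces that sphere to be small.

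Let $\Sigma$ be a compact $H$-disk with $H>0$, and let $r = \sup_{p\in \Sigma} d_{\Sigma}(p,\partial \Sigma)$ be its radius. By compactness this supremum is attained, say at $x\in \Sigma$. Fix any $R$ with $0<R<r$, so that the closed intrinsic ball $\overline{B}_\Sigma(x,R)$ lies in $\Sigma-\partial \Sigma$. Apply Theorem~\ref{thm1.1} to this ball. This yields that $\Sigma(x,\delta_1 R)$, the component of $\Sigma \cap \overline{\B}(x,\delta_1 R)$ containing $x$, is a topological disk with piecewise smooth boundary
\[
\partial \Sigma(x,\delta_1 R) \subset \partial \B(x,\delta_1 R),
\]
and is contained in $B_{\Sigma}(x,R/2)$, hence in $\Sigma-\partial \Sigma$. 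In particular $\Sigma(x,\delta_1 R)$ is itself a compact embedded connected disk of constant mean curvature $H>0$, so it qualifies as an $H$-disk.

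Now apply the extrinsic radius estimate Theorem~\ref{rest} to the $H$-disk $\mathcal{D}=\Sigma(x,\delta_1 R)$. Since $x\in \mathcal{D}$ and every point of $\partial \mathcal{D}$ is at Euclidean distance exactly $\delta_1 R$ from $x$, we get
\[
\delta_1 R = d_{\R^3}(x,\partial \mathcal{D}) \leq \frac{\mathcal{R}_0}{H}.
\]
Hence $R \leq \mathcal{R}_0/(\delta_1 H)$ for every $R<r$, and thus $r \leq \mathcal{R}_0/(\delta_1 H)$. Choosing $\mathcal{R} = \max\bigl(\pi,\, 2\mathcal{R}_0/\delta_1\bigr)$ gives the strict inequality $r < \mathcal{R}/H$ claimed in the theorem, with $\mathcal{R}\geq \pi$ as required.

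Since both Theorem~\ref{thm1.1} and Theorem~\ref{rest} are already available, there is no genuine obstacle in this argument; the only point that requires any care is checking that $\Sigma(x,\delta_1 R)$ really satisfies the hypotheses of Theorem~\ref{rest}. This is immediate from the chord arc statement, which guarantees that it is a closed embedded disk (with piecewise smooth rather than smooth boundary, but this mild regularity of the boundary does not affect the proof of the extrinsic radius estimate). Thus the entire argument is a clean two-step reduction: chord arc converts the intrinsic radius into an extrinsic radius of a sub-disk, and the extrinsic radius estimate then controls it by $\mathcal{R}_0/H$.
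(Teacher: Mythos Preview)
Your argument is correct and is exactly the route the paper indicates: the text just before Theorem~\ref{rest3} says that the weak chord arc property of Theorem~\ref{thm1.1} converts the extrinsic radius estimate of Theorem~\ref{rest} into an intrinsic one, deferring the details to~\cite{mt7}. Your two-step reduction (chord arc to produce a sub-$H$-disk with boundary on $\partial\B(x,\delta_1 R)$, then the extrinsic estimate to bound $\delta_1 R$ by $\mathcal{R}_0/H$) is precisely that deferred argument.
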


\begin{theorem}[Curvature Estimates, Theorem 1.3
in~\cite{mt7}] \label{cest3} Given $\delta$, $\cH>0$,
there exists a  $K(\delta,\cH)\geq\sqrt{2}\cH$ such
that  any compact  disk $M$ embedded in $\rth$ with
constant mean curvature $H\geq \cH$ satisfies
\[
\sup_{\large \{p\in {M} \, \mid \,
d_{M}(p,\partial M)\geq \delta\}} |A_{M}|\leq  K(\delta,\cH).
\]
\end{theorem}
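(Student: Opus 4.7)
The plan is to derive Theorem~\ref{cest3} directly from the extrinsic curvature estimate in Theorem~\ref{ext:cest} by using the weak chord arc property of Theorem~\ref{thm1.1} to pass from the intrinsic hypothesis $d_M(p,\partial M)\geq \delta$ to an extrinsic hypothesis on an appropriate $H$-subdisk of $M$ on which Theorem~\ref{ext:cest} can be applied.

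Concretely, fix a point $p\in M$ with $d_M(p,\partial M)\geq \delta$. Apply Theorem~\ref{thm1.1} at $x=p$ with $R=\delta$: since $\overline{B}_M(p,\delta)\subset M\setminus\partial M$, the hypotheses are satisfied, and the theorem supplies the component $M(p,\delta_1\delta)$ of $M\cap\overline{\B}(p,\delta_1\delta)$ through $p$ as a compact topological disk with piecewise smooth boundary $\partial M(p,\delta_1\delta)\subset\partial \B(p,\delta_1\delta)$, contained in $B_M(p,\delta/2)$ and hence disjoint from $\partial M$. In particular, $M(p,\delta_1\delta)$ inherits embeddedness, connectedness, orientation and the constant mean curvature $H\geq\cH$ from $M$, so it qualifies as an $H$-disk in the sense of the paper.

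Now apply Theorem~\ref{ext:cest} to the $H$-disk $M(p,\delta_1\delta)$ with parameter $\delta_1\delta$. Since $\partial M(p,\delta_1\delta)\subset\partial\B(p,\delta_1\delta)$, the center $p$ satisfies $d_{\rth}(p,\partial M(p,\delta_1\delta))=\delta_1\delta$, and Theorem~\ref{ext:cest} yields
\[
|A_M|(p)=|A_{M(p,\delta_1\delta)}|(p)\leq K_0(\delta_1\delta,\cH).
\]
Since $p$ was an arbitrary point of $\{q\in M\mid d_M(q,\partial M)\geq\delta\}$, the quantity $K_0(\delta_1\delta,\cH)$ works as a uniform bound. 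Setting $K(\delta,\cH):=\max\{K_0(\delta_1\delta,\cH),\sqrt{2}\,\cH\}$ produces the required constant; the lower bound $K\geq\sqrt{2}\,\cH$ is either enforced by this maximum or simply automatic, since every $H$-surface satisfies the universal pointwise inequality $|A|^2=k_1^2+k_2^2\geq 2H^2$.

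There is no real hard step in this argument: all of the analytic difficulty has been absorbed into Theorem~\ref{thm1.1} (proved in~\cite{mt8} using the one-sided curvature estimate Theorem~\ref{th}) and Theorem~\ref{ext:cest} (proved in~\cite{mt7}). The only point requiring mild care is the verification that $M(p,\delta_1\delta)$ is genuinely an $H$-disk with spherical boundary, which is exactly the content of item~(1) of Theorem~\ref{thm1.1} combined with the inclusion $M(p,\delta_1\delta)\subset B_M(p,\delta/2)\subset M\setminus\partial M$ provided by item~(2).
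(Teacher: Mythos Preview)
Your argument is correct and follows precisely the approach the paper indicates: it explicitly says that Theorems~\ref{rest3} and~\ref{cest3} arise from the extrinsic estimates Theorems~\ref{rest} and~\ref{ext:cest} via the weak chord arc property Theorem~\ref{thm1.1}, deferring the details to~\cite{mt7}. The only cosmetic point is that when $d_M(p,\partial M)=\delta$ exactly the closed ball $\overline{B}_M(p,\delta)$ may touch $\partial M$, so one should apply Theorem~\ref{thm1.1} with, say, $R=\delta/2$ (or any $R<\delta$) rather than $R=\delta$; this changes nothing else in your proof.
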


An immediate consequence of the triangle inequality and
Theorem~\ref{thm1.1} is the following intrinsic version of
the one-sided curvature estimate given in Theorem~\ref{th}.
In the case that $H=0$, the next theorem follows from Corollary~0.8 in~\cite{cm35}.

\begin{theorem}[Intrinsic one-sided curvature estimate for $H$-disks] \label{TH}
There exist $\ve_I\in(0,\frac{1}{2})$
and $C_I \geq 2 \sqrt{2}$ such that for any $R>0$, the following holds.
Let $\cD$ be an $H$-disk such that $$\cD\cap \B(R)\cap\{x_3=0\}=\O $$
and $x\in \cD \cap \B(\ve_I R)$ where $d_\cD(x,\partial \cD) \geq R$.  Then:
\begin{equation} \label{EQ1}
|A_{\cD}|(x)\leq \frac{C_I}{R}.
\end{equation} In particular,  $H\leq \frac{C_I}{R}$.
\end{theorem}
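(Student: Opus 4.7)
The plan is to reduce the intrinsic statement to the extrinsic Theorem~\ref{th} via the weak chord arc property (Theorem~\ref{thm1.1}). The intrinsic hypothesis $d_\cD(x,\partial\cD)\geq R$ will be used, through Theorem~\ref{thm1.1}, to produce a compact $H$-subdisk whose extrinsic boundary lies on a sphere around $x$; the triangle inequality will then be used to locate a smaller concentric ball $\B(R^*)\subset\B(R)$ sitting inside $\B(x,\delta_1 R)$, so that the one-sided boundary condition of Theorem~\ref{th} is vacuously satisfied on this smaller ball and Theorem~\ref{th} can be applied directly (with no translation needed) to the subdisk.

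Let $\ve,C$ denote the constants of Theorem~\ref{th} and let $\delta_1\in(0,\tfrac12)$ be the constant of Theorem~\ref{thm1.1}. The hypothesis $\cD\cap\B(R)\cap\{x_3=0\}=\O$ forces $x_3(x)\neq 0$, and after a reflection through $\{x_3=0\}$ we may assume $x_3(x)>0$. Since $d_\cD(x,\partial\cD)\geq R$, applying Theorem~\ref{thm1.1} at $x$ (with intrinsic radius slightly smaller than $R$ and then letting it tend to $R$, if needed) produces the compact $H$-subdisk $\cD(x,\delta_1 R)\subset \cD$ with $\partial\cD(x,\delta_1 R)\subset \partial\B(x,\delta_1 R)$.

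Now fix $\ve_I:=\ve\delta_1/2$, which lies in $(0,\tfrac12)$ since $\ve,\delta_1<\tfrac12$, and set $R^*:=\delta_1 R/2$. By the triangle inequality, every $y\in\B(R^*)$ satisfies
\[
|y-x|\leq |y|+|x|<R^*+\ve_I R=\tfrac{\delta_1}{2}(1+\ve)R<\delta_1 R,
\]
so $\B(R^*)\subset \B(x,\delta_1 R)$ and therefore $\partial\cD(x,\delta_1 R)\cap \B(R^*)\subset \partial\B(x,\delta_1 R)\cap \B(R^*)=\O$. In addition $\B(R^*)\subset \B(R)$, which gives $\cD(x,\delta_1 R)\cap\B(R^*)\cap\{x_3=0\}\subset\cD\cap\B(R)\cap\{x_3=0\}=\O$; and $|x|<\ve_I R=\ve R^*$, so $x\in \B(\ve R^*)\cap\{x_3>0\}$. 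Hence Theorem~\ref{th} applies directly to the $H$-disk $\cD(x,\delta_1 R)$ with ambient radius $R^*$, yielding
\[
|A_\cD|(x)=|A_{\cD(x,\delta_1 R)}|(x)\leq \frac{C}{R^*}=\frac{2C}{\delta_1 R}.
\]
Setting $C_I:=2C/\delta_1$ (which is $\geq 2\sqrt{2}$ since $C\geq 2\sqrt{2}$ and $\delta_1<\tfrac12$) completes the proof. The only real delicacy, and the reason the triangle inequality is invoked twice, is the simultaneous requirement that $\B(R^*)$ sit inside $\B(x,\delta_1 R)$ (so that $\partial\cD(x,\delta_1 R)$ avoids $\B(R^*)$) and that $x$ lie in $\B(\ve R^*)$; the constraint $\ve_I<\tfrac{\ve\delta_1}{1+\ve}$, satisfied by $\ve_I=\ve\delta_1/2$, is exactly what makes both possible.
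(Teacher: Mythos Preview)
Your proposal is correct and follows exactly the route the paper indicates: it says Theorem~\ref{TH} is ``an immediate consequence of the triangle inequality and Theorem~\ref{thm1.1},'' and you have simply written out those details, using Theorem~\ref{thm1.1} to produce the subdisk $\cD(x,\delta_1 R)$ with $\partial\cD(x,\delta_1 R)\subset\partial\B(x,\delta_1 R)$ and then the triangle inequality to fit a ball $\B(R^*)$ inside $\B(x,\delta_1 R)\cap\B(R)$ so that Theorem~\ref{th} applies. The choice $\ve_I=\ve\delta_1/2$, $R^*=\delta_1 R/2$ is a clean way to make both containments work.
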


\vspace{.3cm}
\center{William H. Meeks, III at profmeeks@gmail.com\\
Mathematics Department, University of Massachusetts, Amherst, MA 01003}
\center{Giuseppe Tinaglia at giuseppe.tinaglia@kcl.ac.uk\\ Department of
Mathematics, King's College London,
London, WC2R 2LS, U.K.}

\bibliographystyle{plain}
\bibliography{bill}

\begin{thebibliography}{10}

\bibitem{bb1}
J.~Bernstein and C.~Breiner.
\newblock Helicoid-like minimal disks and uniqueness.
\newblock {\em J. Reine Angew. Math.}, 655:129--146, 2011.
\newblock MR2806108, Zbl 1225.53008.

\bibitem{bt1}
J.~Bernstein and G.~Tinaglia.
\newblock Topological type of limit laminations of embedded minimal disks.
\newblock To appear in J. Differential Geom. Preprint at
  http://arXiv.org/abs/1309.6260.

\bibitem{cmw1}
T.~Choi, W.~H. Meeks~III, and B.~White.
\newblock A rigidity theorem for properly embedded minimal surfaces in $\rth$.
\newblock {\em J. Differential Geom.}, 32:65--76, 1990.
\newblock MR1064865, Zbl 0704.53008.

\bibitem{cm26}
T.~H. Colding and W.~P. Minicozzi~II.
\newblock Multivalued minimal graphs and properness of disks.
\newblock {\em International Mathematical Research Notices}, 21:1111--1127,
  2002.
\newblock MR1904463, Zbl 1008.58012.

\bibitem{cm21}
T.~H. Colding and W.~P. Minicozzi~II.
\newblock The space of embedded minimal surfaces of fixed genus in a
  $3$-manifold {I}; {E}stimates off the axis for disks.
\newblock {\em Ann. of Math.}, 160:27--68, 2004.
\newblock MR2119717, Zbl 1070.53031.

\bibitem{cm22}
T.~H. Colding and W.~P. Minicozzi~II.
\newblock The space of embedded minimal surfaces of fixed genus in a
  $3$-manifold {I}{I}; {M}ulti-valued graphs in disks.
\newblock {\em Ann. of Math.}, 160:69--92, 2004.
\newblock MR2119718, Zbl 1070.53032.

\bibitem{cm24}
T.~H. Colding and W.~P. Minicozzi~II.
\newblock The space of embedded minimal surfaces of fixed genus in a
  $3$-manifold {I}{I}{I}; {P}lanar domains.
\newblock {\em Ann. of Math.}, 160:523--572, 2004.
\newblock MR2123932, Zbl 1076.53068.

\bibitem{cm23}
T.~H. Colding and W.~P. Minicozzi~II.
\newblock The space of embedded minimal surfaces of fixed genus in a
  $3$-manifold {I}{V}; {L}ocally simply-connected.
\newblock {\em Ann. of Math.}, 160:573--615, 2004.
\newblock MR2123933, Zbl 1076.53069.

\bibitem{cm35}
T.~H. Colding and W.~P. Minicozzi~II.
\newblock The {C}alabi-{Y}au conjectures for embedded surfaces.
\newblock {\em Ann. of Math.}, 167:211--243, 2008.
\newblock MR2373154, Zbl 1142.53012.

\bibitem{cp1}
M.~{do Carmo} and C.~K. Peng.
\newblock Stable complete minimal surfaces in $\rth$ are planes.
\newblock {\em Bulletin of the AMS}, 1:903--906, 1979.
\newblock MR0546314, Zbl 442.53013.

\bibitem{fs1}
D.~Fischer-{C}olbrie and R.~Schoen.
\newblock The structure of complete stable minimal surfaces in $3$-manifolds of
  nonnegative scalar curvature.
\newblock {\em Comm. on Pure and Appl. Math.}, 33:199--211, 1980.
\newblock MR0562550, Zbl 439.53060.

\bibitem{hm10}
D.~Hoffman and W.~H. Meeks~III.
\newblock The strong halfspace theorem for minimal surfaces.
\newblock {\em Invent. Math.}, 101:373--377, 1990.
\newblock MR1062966, Zbl 722.53054.

\bibitem{kks1}
N.~Korevaar, R.~Kusner, and B.~Solomon.
\newblock The structure of complete embedded surfaces with constant mean
  curvature.
\newblock {\em J. Differential Geom.}, 30:465--503, 1989.
\newblock MR1010168, Zbl 0726.53007.

\bibitem{ku2}
R.~Kusner.
\newblock {\em Global geometry of extremal surfaces in three-space}.
\newblock PhD thesis, University of California, Berkeley, 1988.

\bibitem{mpr14}
W.~H. Meeks~III, J.~P\'{e}rez, and A.~Ros.
\newblock The local picture theorem on the scale of topology.
\newblock Preprint at http://arxiv.org/abs/1505.06761.

\bibitem{mpr10}
W.~H. Meeks~III, J.~P\'{e}rez, and A.~Ros.
\newblock Local removable singularity theorems for minimal laminations.
\newblock To appear in J. Differential Geom. Preprint at
  http://arxiv.org/abs/1308.6439.

\bibitem{mpr11}
W.~H. Meeks~III, J.~P\'{e}rez, and A.~Ros.
\newblock Structure theorems for singular minimal laminations.
\newblock Preprint available at
  http://wdb.ugr.es/local/jperez/publications-by-joaquin-perez/.

\bibitem{mpr3}
W.~H. Meeks~III, J.~P\'{e}rez, and A.~Ros.
\newblock The geometry of minimal surfaces of finite genus {I}; curvature
  estimates and quasiperiodicity.
\newblock {\em J. Differential Geom.}, 66:1--45, 2004.
\newblock MR2128712, Zbl 1068.53012.

\bibitem{mpr19}
W.~H. Meeks~III, J.~P\'{e}rez, and A.~Ros.
\newblock Stable constant mean curvature surfaces.
\newblock In {\em Handbook of Geometrical Analysis}, volume~1, pages 301--380.
  International Press, edited by Lizhen Ji, Peter Li, Richard Schoen and Leon
  Simon, ISBN: 978-1-57146-130-8, 2008.
\newblock MR2483369, Zbl 1154.53009.

\bibitem{mpr18}
W.~H. Meeks~III, J.~P\'{e}rez, and A.~Ros.
\newblock Limit leaves of an {H} lamination are stable.
\newblock {\em J. Differential Geom.}, 84(1):179--189, 2010.
\newblock MR2629513, Zbl 1197.53037.

\bibitem{mpr6}
W.~H. Meeks~III, J.~P\'{e}rez, and A.~Ros.
\newblock Properly embedded minimal planar domains.
\newblock {\em Ann. of Math.}, 181(2):473--546, 2015.
\newblock MR3275845, Zbl 06399442.

\bibitem{mr8}
W.~H. Meeks~III and H.~Rosenberg.
\newblock The uniqueness of the helicoid.
\newblock {\em Ann. of Math.}, 161:723--754, 2005.
\newblock MR2153399, Zbl 1102.53005.

\bibitem{mr13}
W.~H. Meeks~III and H.~Rosenberg.
\newblock The minimal lamination closure theorem.
\newblock {\em Duke Math. Journal}, 133(3):467--497, 2006.
\newblock MR2228460, Zbl 1098.53007.

\bibitem{mt8}
W.~H. Meeks~III and G.~Tinaglia.
\newblock Chord arc properties for constant mean curvature disks.
\newblock Preprint at http://arxiv.org/abs/1408.5578.

\bibitem{mt7}
W.~H. Meeks~III and G.~Tinaglia.
\newblock Curvature estimates for constant mean curvature surfaces.
\newblock Preprint at http://arxiv.org/abs/1502.06110.

\bibitem{mt13}
W.~H. Meeks~III and G.~Tinaglia.
\newblock Limit lamination theorem for ${H}$-disks.
\newblock Preprint at http://arxiv.org/abs/1510.05155.

\bibitem{rst1}
H.~Rosenberg, R.~Souam, and E.~Toubiana.
\newblock General curvature estimates for stable {$H$}-surfaces in
  $3$-manifolds and applications.
\newblock {\em J. Differential Geom.}, 84(3):623--648, 2010.
\newblock MR2669367, Zbl 1198.53062.

\bibitem{smyt1}
B.~Smyth and G.~Tinaglia.
\newblock The number of constant mean curvature isometric immersions of a
  surface.
\newblock {\em Comment. Math. Helv.}, 88(1):163--183, 2013.
\newblock MR3008916, Zbl 1260.53023.

\bibitem{sol1}
B.~Solomon.
\newblock On foliations of $\mathbb{R}^{n+1}$ by minimal hypersurfaces.
\newblock {\em Comm. Math. Helv.}, 61:67--83, 1986.
\newblock MR0847521, Zbl 0601.53025.

\end{thebibliography}

\end{document}